\newtheorem{theorem}{Theorem}
\newtheorem{lemma}[theorem]{Lemma}
\newtheorem{proposition}[theorem]{Proposition}
\newtheorem{corollary}[theorem]{Corollary}
\theoremstyle{definition}
\newtheorem{remark}[theorem]{Remark}
\begin{document}

\title[Cocharacters of upper triangular matrices]
{Cocharacters of polynomial identities\\
of upper triangular matrices}

\author[Silvia Boumova, Vesselin Drensky]
{Silvia Boumova, Vesselin Drensky}
\address{Institute of Mathematics and Informatics,
Bulgarian Academy of Sciences,
1113 Sofia, Bulgaria}
\email{silvi@math.bas.bg, drensky@math.bas.bg}

\subjclass[2010]
{16R10; 05A15; 05E05; 05E10; 20C30.}
\keywords{Algebras with polynomial identity, upper triangular matrices, cocharacter sequence, multiplicities, Hilbert series.}

\maketitle

\begin{center}{To Georgi Genov -- teacher, colleague and friend,\\
on the occasion of his retirement}\end{center}

\begin{abstract}
Let $T(U_k)$ be the T-ideal of the polynomial identities of the algebra of $k\times k$ upper triangular matrices
over a field of characteristic zero. We give an easy algorithm which calculates the generating function
of the cocharacter sequence $\chi_n(U_k)=\sum_{\lambda\vdash n}m_{\lambda}(U_k)\chi_{\lambda}$ of the T-ideal $T(U_k)$.
Applying this algorithm we have found the explicit form of the multiplicities $m_{\lambda}(U_k)$ in
two cases: (i) for the ``largest'' partitions $\lambda=(\lambda_1,\ldots,\lambda_n)$
which satisfy $\lambda_{k+1}+\cdots+\lambda_n=k-1$; (ii) for the first several $k$ and any $\lambda$.
\end{abstract}

\section*{Introduction}

We fix a field $K$ of characteristic 0 and consider unital associative algebras over $K$ only.
For a background on PI-algebras and details of the results discussed
in the introduction we refer to the book \cite{D6}.
Let $R$ be a PI-algebra and  let
\[
T(R)\subset K\langle X\rangle=K\langle x_1,x_2,\ldots\rangle
\]
be the T-ideal of its polynomial identities, where $K\langle X\rangle$ is the free associative algebra
of countable rank. One of the most important objects in the quantitative study of the polynomial identities of $R$
is the cocharacter sequence
\[
\chi_n(R)=\sum_{\lambda\vdash n}m_{\lambda}(R)\chi_{\lambda},\quad n=0,1,2,\ldots,
\]
where the summation runs on all partitions $\lambda=(\lambda_1,\ldots,\lambda_n)$ of $n$
and $\chi_{\lambda}$ is the corresponding irreducible character of the symmetric group $S_n$.
The explicit form of the multiplicities $m_{\lambda}(R)$ is known for few algebras only, among them
the Grassmann algebra $E$ (Krakowski and Regev \cite{KR}, Olsson and Regev \cite{OR}),
the $2\times 2$ matrix algebra $M_2(K)$ (Formanek \cite{F1} and Drensky \cite{D2}),
the algebra $U_2(K)$ of the $2\times 2$ upper triangular matrices
(Mishchenko, Regev and Zaicev \cite{MRZ}, based on the approach of Berele and Regev \cite{BR1}, see also \cite{D6}),
the tensor square $E\otimes E$ of the Grassmann algebra (Popov \cite{P2}, Carini and Di Vincenzo \cite{CDV}),
the algebra $U_2(E)$ of $2\times 2$ upper triangular matrices
with Grassmann entries (Centrone \cite{Ce}).

The $n$-th cocharacter $\chi_n(R)$ is equal to the character of the representation of $S_n$ acting on the vector subspace
$P_n\subset K\langle X\rangle$ of the multilinear polynomials of degree $n$ modulo the polynomial identities of $R$.
It is related with another important group action, namely the action of the general linear group $GL_d=GL_d(K)$ on
the $d$-generated free subalgebra $K\langle X_d\rangle=K\langle x_1,\ldots,x_d\rangle\subset K\langle X\rangle$ modulo the polynomial
identities in $d$ variables of $R$. The algebra
\[
F_d(R)=K\langle X_d\rangle/(K\langle X_d\rangle\cap T(R))
=K\langle x_1,\ldots,x_d\rangle/(K\langle x_1,\ldots,x_d\rangle\cap T(R))
\]
is called the relatively free algebra of rank $d$ in the variety of algebras $\text{var}(R)$ generated by the algebra $R$.
It is ${\mathbb Z}_d$-graded with grading defined by
\[
\deg(x_1)=(1,0,\ldots,0),\deg(x_2)=(0,1,\ldots,0),\ldots,\deg(x_d)=(0,0,\ldots,1).
\]
The Hilbert series
\[
H(F_d(R),T_d)=H(F_d(R),t_1,\ldots,t_d)=\sum_{n_i\geq 0}\dim(F_d^{(n_1,\ldots,n_d)}(R))t_1^{n_1}\cdots t_d^{n_d}
\]
of $F_d(R)$, where $F_d^{(n_1,\ldots,n_d)}(R)$ is the homogeneous component of degree $(n_1,\ldots,n_d)$ of $F_d(R)$,
is a symmetric function which
plays the role of the character of the corresponding $GL_d$-representation. The Schur functions
$S_{\lambda}(T_d)=S_{\lambda}(t_1,\ldots,t_d)$ are the characters of the irreducible $GL_d$-submodules
$W_d(\lambda)$ of $F_d(R)$ and
\[
H(F_d(R),T_d)=\sum_{\lambda}m_{\lambda}(R)S_{\lambda}(T),\quad \lambda=(\lambda_1,\ldots,\lambda_d).
\]
By a result of Berele \cite{B1} and Drensky \cite{D1, D2},
the multiplicities $m_{\lambda}(R)$ are the same as in the cocharacter sequence $\chi_n(R)$, $n=0,1,2,\ldots$.
Hence, in principle, if we know the Hilbert series $H(F_d(R),T_d)$, we can find the multiplicities $m_{\lambda}(R)$
in $\chi_n(R)$ for those $\lambda$ which are partitions in not more than $d$ parts.
When $R$ is a finite dimensional algebra, the multiplicities $m_{\lambda}(R)$ are equal to zero for partitions
$\lambda=(\lambda_1,\ldots,\lambda_d)$, $\lambda_d\not=0$, for $d>\dim(R)$, see Regev \cite{R2}.
Hence all $m_{\lambda}(R)$ can be recovered
from $H(F_d(R),T_d)$ for $d$ sufficiently large.
Following the idea of Drensky and Genov \cite{DG1} we consider the multiplicity series of $R$
\[
M(R;T_d)=M(R;t_1,\ldots,t_d)= \sum_{\lambda}m_{\lambda}(R)T_d^{\lambda}
=\sum_{\lambda}m_{\lambda}(R)t_1^{\lambda_1}\cdots t_d^{\lambda_d}.
\]
This is the generating function of the cocharacter sequence of $R$ which corresponds to the multiplicities $m_{\lambda}(R)$
when $\lambda$ is a partition in $\leq d$ parts. Then, if we know the Hilbert series $H(F_d(R),T_d)$, the problem
is to compute the multiplicity series $M(R;T_d)$ and to find its coefficients. This problem was solved in \cite{DG2}
for rational symmetric functions of special kind and in two variables. Berele \cite{B2} suggested another approach involving
the so called nice rational functions which allowed, see Berele and Regev \cite{BR2} and Berele \cite{B3},
to solve for unital algebras the conjecture of Regev about the precise asymptotics of the growth of the codimension sequence
of PI-algebras. But the results of \cite{B2, BR2, B3} do not give explicit algorithms to find the multiplicities of
the irreducible characters. One can apply classical algorithms to find the multiplicity series when the Hilbert series
of the relatively free algebra is known. These algorithms follow from the method of Elliott \cite{E},
improved by MacMahon \cite{MM} in his ``$\Omega$-Calculus'' or Partition Analysis, with further improvements and
computer realizations, see Andrews, Paule and Riese \cite{APR} and Xin \cite{X}. See also the series of
twelve papers on MacMahon's partition analysis by Andrews, alone or jointly with Paule, Riese and Strehl (I -- \cite{A}$,\ldots,$
XII -- \cite{AP}).

Formanek \cite{F2} expressed the Hilbert series of the product of two T-ideals
in terms of the Hilbert series of the factors. Berele and Regev \cite{BR1} translated this result in the language of cocharacters.
If $\chi_n(R_1)$ and $\chi_n(R_2)$ are, respectively, the cocharacter sequences of the algebras $R_1$ and $R_2$, then
the cocharacter sequence of the T-ideal $T(R)=T(R_1)T(R_2)$ is
\[
\chi_n(R)=\chi_n(R_1)+\chi_n(R_2)+\chi_{(1)}\widehat\otimes \sum_{j=0}^{n-1}\chi_j(R_1)\widehat\otimes\chi_{n-j-1}(R_2)
-\sum_{j=0}^n\chi_j(R_1)\widehat\otimes\chi_{n-j}(R_2),
\]
where $\widehat\otimes$ denotes the ``outer'' tensor product of characters. For irreducible characters it corresponds
to the Littlewood-Richardson rule for products of Schur functions:
\[
\chi_{\lambda}\widehat\otimes\chi_{\mu}=\sum_{\nu\vdash\vert\lambda\vert+\vert\mu\vert}c_{\lambda\mu}^{\nu}\chi_{\nu},
\]
where
\[
S_{\lambda}(T_d)S_{\mu}(T_d)=\sum_{\nu\vdash\vert\lambda\vert+\vert\mu\vert}c_{\lambda\mu}^{\nu}S_{\nu}(T_d),
\]
\[
\lambda=(\lambda_1,\ldots,\lambda_p),\quad \mu=(\mu_1,\ldots,\mu_q),\quad d\geq p+q.
\]

In the present paper we study the cocharacter sequence of the algebra $U_k=U_k(K)$ of $k\times k$ upper triangular matrices.
The algebra $U_k$ is one of the central objects in the theory of PI-algebras satisfying a nonmatrix polynomial identity
(i.e., an identity which does not hold for the $2\times 2$ matrix algebra $M_2(K)$).
Latyshev \cite{L1} proved that every finitely generated PI-algebra with a nonmatrix identity
satisfies the identities of $U_k$ for a suitable $k$.
Hence the polynomial identities of $U_k$ may serve as a measure of the complexity of the polynomial identities
of finitely generated algebras with nonmatrix identity in the same way as the polynomial identities of the $k\times k$ matrix
algebra $M_k(K)$ measure the complexity of the identities of arbitrary PI-algebras, see \cite{L3}.

Yu. Maltsev \cite{Ma} showed that the polynomial identities of $U_k$ follow from the identity
\[
[x_1,x_2]\cdots [x_{2k-1},x_{2k}]=0,
\]
where $[x,y]=xy-yx$ is the commutator of $x$ and $y$. This means that $T(U_k)=C^k$, where
\[
C=T(K)=K\langle X\rangle[K\langle X\rangle,K\langle X\rangle]K\langle X\rangle
\]
is the commutator ideal of $K\langle X\rangle$.
Using purely combinatorial methods (the technique of partial ordered sets due to Higman \cite{Hi} and Cohen \cite{C})
Genov \cite{G1, G2} and Latyshev \cite{L2} proved that
every algebra satisfying the identities of $U_k$ has a finite basis of its polynomial identities.
Later this result was generalized by Latyshev \cite{L4} and Popov \cite{P1} for PI-algebras satisfying the identity
\[
[x_1,x_2,x_3]\cdots [x_{3k-2},x_{3k-1},x_{3k}]=0
\]
which generates the T-ideal $T(U_k(E))=T^k(E)$ of the algebra $U_k(E)$
of $k\times k$ upper triangular matrices with entries from the Grassmann algebra $E$. For long time, until Kemer
developed his structure theory of T-ideals and solved the Specht problem (i.e., finite basis problem) for arbitrary PI-algebras
(\cite{K1, K2}, see also his book \cite{K3} for an account of the theory), the theorems of Genov, Latyshev and Popov
\cite{G1, G2, L2, L4, P1} covered all known examples of classes of PI-algebras with the finite basis property.

Using methods of commutative algebra Krasilnikov \cite{Kr} established that every Lie algebra satisfying the Lie polynomial identities of $U_k$
has a finite basis of its Lie identities. His approach works also for associative algebras and gives a simple proof of the result
of Genov \cite{G1, G2} and Latyshev \cite{L2}. Drensky \cite{D3}, using the method of Krasilnikov \cite{Kr} showed that
the Hilbert series of every finitely generated relatively free algebra with nonmatrix polynomial identity is a rational function.
Later this fact was generalized by Belov \cite{Be} (see \cite{KBR} for detailed exposition)
for arbitrary finitely generated relatively free algebras, using the theory of Kemer \cite{K3}.

The T-ideals of the algebras $U_k(K)$ and $U_k(E)$ have another interesting property \cite{D4}.
They are examples of maximal T-ideals of a given exponent of the codimension sequences (and the corresponding
varieties of algebras are minimal varieties of this exponent). Later, Giambruno and Zaicev \cite{GZ1, GZ2}
used the theory of Kemer \cite{K3} combined with methods of representation theory of the symmetric groups and
proved the conjecture of \cite{D4} that the only maximal T-ideals of a given exponent are the products
$T(R_1)\cdots T(R_k)$, where $T(R_i)$ are T-prime T-ideals from the structure theory of T-ideals developed by Kemer \cite{K1}.

In this paper we use the result of Formanek \cite{F2}
and calculate the Hilbert series $H(F_d(U_k),T_d)$ for any $k$ and $d$:
\[
H(F_d(U_k),T_d)=\frac{1}{t_1+\cdots +t_d-1}\left(\left(1+(t_1+\cdots+t_d-1)\prod_{i=1}^d\frac{1}{1-t_i}\right)^k-1\right)
\]
\[
=\sum_{j=1}^k\binom{k}{j}\left(\prod_{i=1}^d\frac{1}{1-t_i}\right)^j(t_1+\cdots+t_d-1)^{j-1}.
\]
Hence $H(F_d(U_k),T_d)$ is a linear combination of expressions of the form
\[
\left(\prod_{i=1}^d\frac{1}{1-t_i}\right)^p(t_1+\cdots+t_d)^q,\quad 0\leq q<p\leq k.
\]
If two symmetric functions $f(T_d)$ and $g(T_d)$ are related by
\[
f(T_d)=g(T_d)\prod_{i=1}^d\frac{1}{1-t_i},
\]
then $f(T_d)$ is Young-derived from $g(T_d)$ and the decomposition of $f(T_d)$ as a series of Schur functions can be
obtained from the decomposition of $g(T_d)$ using the Young rule.

Using the decomposition
\[
(t_1+\cdots+t_d)^q=\sum_{\lambda\vdash q}d_{\lambda}S_{\lambda}(T_d),
\]
where $d_{\lambda}$ is the degree of the irreducible $S_q$-character $\chi_{\lambda}$,
it is sufficient to apply the Young rule up to $k$ times on the Schur functions $S_{\lambda}(T_d)$
for all partitions $\lambda$ of $q\leq k-1$.
It has turned out that if $m_{\lambda}(U_k(K))$ is different from zero, then $\lambda=(\lambda_1,\ldots,\lambda_{2k-1})$
is a partition in not more than
$2k-1$ parts and $\overline{\lambda}=(\lambda_{k+1},\ldots,\lambda_{2k-1})$ is a partition of $i\leq k-1$.

We have found the exact value of $m_{\lambda}(U_k(K))$ for the ``largest'' partitions with $\overline{\lambda}$ a partition of $k-1$.
We use the fact that in the decomposition
\[
 \prod_{i=1}^k \frac{1}{(1-t_i)^k} = \sum_{\mu} n_{\mu}S_{\mu}(T_k),\quad \mu=(\mu_1,\ldots,\mu_k),
\]
the coefficient $n_{\mu}$ of $S_{\mu}(T_k)$
is equal to the dimension of the irreducible $GL_k$-module $W_k(\mu)$ corresponding to the partition $\mu$.
We obtain for $\overline{\lambda}\vdash k-1$,
\[
m_{\lambda}(U_k)=d_{\overline{\lambda}}\dim(W_k(\lambda_1,\ldots,\lambda_k)),
\]
where $d_{\overline{\lambda}}$ is the degree of the $S_{k-1}$-character $\chi_{\overline{\lambda}}$ and
\[
\dim(W_k(\lambda_1,\ldots,\lambda_k))=S_{\lambda}(\underbrace{1,\ldots,1}_{k\text{ \rm times}})
=\prod_{1\leq i<j\leq k}  \frac{\lambda_i - \lambda_j +j -i}{j-i}.
\]

There is an easy algorithm from \cite{DG1} with input
the multiplicity series of a symmetric function and output the multiplicity series of its Young-derived.
Applying it, we have found the explicit form of the multiplicity series of $H(F_d(U_k),T_d)$
and the multiplicities $m_{\lambda}(U_k)$ for the first several $k$ and any $\lambda$.

The main results of this paper have been announced without proofs in \cite{BD}.

\section{Preliminaries}

We fix a positive integer $d$ and consider the algebra
\[
{\mathbb C}[[T_d]]={\mathbb C}[[t_1,\ldots,t_d]]
\]
of formal power series in $d$ commuting variables.
As usually, if
\[
f(T_d)=\frac{p(T_d)}{q(T_d)}\in {\mathbb C}(T_d),\quad p(T_d),q(T_d)\in {\mathbb C}[T_d],q(T_d)\not=0,
\]
is a rational function and $g(T_d)\in {\mathbb C}[[T_d]]$ is such that
$p(T_d)=q(T_d)g(T_d)$ in ${\mathbb C}[[T_d]]$, we shall identify $f(T_d)$ and $g(T_d)$.
Let ${\mathbb C}[[T_d]]^{S_d}$ be the subalgebra of symmetric functions. Every symmetric function $f(T_d)\in {\mathbb C}[[T_d]]^{S_d}$
can be presented in the form
\[
f(T_d)=\sum_{\lambda}m_{\lambda}S_{\lambda}(T_d),\quad m_{\lambda}\in {\mathbb C}, \lambda=(\lambda_1,\ldots,\lambda_d),
\]
where $S_{\lambda}(T_d)$ is the Schur function related to $\lambda$. For details on the theory of Schur functions see
the book by Macdonald \cite{Mc}.
As usually, we shall omit the zeros in the partitions and shall identify $(\lambda_1,\ldots,\lambda_i,0,\ldots,0)$
and $(\lambda_1,\ldots,\lambda_i)$.
There are several ways to define Schur functions. The most convenient for our purposes is
to define them as fractions of Vandermonde type determinants:
\[
S_{\lambda}(T_d)=\frac{V(\lambda+\delta)}{V(\delta)},
\]
where $\delta=(d-1,\ldots,2,1)$ and for $\mu=(\mu_1,\ldots,\mu_d)$
\[
V(\mu,T_d)=\left\vert\begin{matrix}
t_1^{\mu_1}&t_2^{\mu_1}&\cdots&t_d^{\mu_1}\\
&&&\\
t_1^{\mu_2}&t_2^{\mu_2}&\cdots&t_d^{\mu_2}\\
&&&\\
\vdots&\vdots&\ddots&\vdots\\
&&&\\
t_1^{\mu_d}&t_2^{\mu_d}&\cdots&t_d^{\mu_d}\\
\end{matrix}\right\vert.
\]

\noindent
We shall denote by $[\lambda]$ the Ferrers (or Young) diagram of $\lambda$.
Recall that the $\lambda$-tableau $D_{\lambda}$ is semistandard if its entries do not decrease in rows
reading from left to right, and increase strictly in columns reading from top to bottom.
The tableau is of contents $\alpha=\alpha(D_{\lambda})=(\alpha_1,\ldots,\alpha_d)$ if each integer $i=1,\ldots,d$
appears in the tableau exactly $\alpha_i$ times.

\bigskip
\begin{center}
\young(1122234,22344,3667,57)\\
\bigskip
A semistandard (7,5,4,2)-tableau\\
of contents $(2,5,3,3,1,2,2)$.
\end{center}
\bigskip

\noindent
Another presentation of Schur functions is given in terms
of semistandard Young tableaux:
\[
S_{\lambda}(T_d)=\sum T^{\alpha(D_{\lambda})},
\]
where the summation runs on all semistandard $\lambda$-tableaux.

\begin{center}
\young(11,2) \, \young(11,3) \, \young(12,2) \, \young(13,3) \, \young(22,3) \, \young(23,3) \, \young(12,3) \, \young(13,2)\\
\bigskip
$\displaystyle S_{(2,1)}(T_3)=\sum_{i\not= j}t_i^2t_j+2t_1t_2t_3$.
\end{center}
\bigskip

\noindent
It is well known that Schur functions play the role of characters of irreducible polynomial representations
of $GL_d$. If $W_d(\lambda)$ is the irreducible $GL_d$-module labeled by the partition $\lambda$, then
\[
\dim(W_d(\lambda))=S_{\lambda}(\underbrace{1,\ldots,1}_{d\text{ \rm times}})
=\prod_{1\leq i<j\leq d}\frac{\lambda_i-\lambda_j+j-i}{j-i}.
\]

We associate with
\[
f(T_d)=\sum_{\lambda}m_{\lambda}S_{\lambda}(T_d)
\]
its multiplicity series
\[
M(f;T_d)=\sum_{\lambda}m_{\lambda}T_d^{\lambda}
=\sum_{\lambda}m_{\lambda}t_1^{\lambda_1}\cdots t_d^{\lambda_d}\in {\mathbb C}[[T_d]].
\]
It is also convenient to consider the subalgebra ${\mathbb C}[[V_d]]\subset {\mathbb C}[[T_d]]$
of the formal power series in the new set of variables $V_d=\{v_1,\ldots,v_d\}$, where
\[
v_1=t_1,v_2=t_1t_2,\ldots,v_d=t_1\cdots t_d.
\]
Then the multiplicity series $M(f;T_d)$ can be written as
\[
M'(f;V_d)=\sum_{\lambda}m_{\lambda}v_1^{\lambda_1-\lambda_2}\cdots
v_{d-1}^{\lambda_{d-1}-\lambda_d}v_d^{\lambda_d}\in {\mathbb C}[[V_d]].
\]
We also call $M'(f;V_d)$ the multiplicity series of $f$. The advantage of the mapping
$M':{\mathbb C}[[T_d]]^{S_d}\to {\mathbb C}[[V_d]]$ defined by
$M':f(T_d)\to M'(f;V_d)$ is that it is a bijection.

For a PI-algebra $R$ we define the multiplicity series of $R$
\[
M(R;T_d)=M(R;t_1,\ldots,t_d)= \sum_{\lambda}m_{\lambda}(R)T_d^{\lambda}
=\sum_{\lambda}m_{\lambda}(R)t_1^{\lambda_1}\cdots t_d^{\lambda_d}.
\]
Similarly we define the series $M'(R;V_d)$.

\begin{lemma}\label{relation between M and YM}
{\rm (Berele \cite{B2})}
The functions $f(T_d)\in {\mathbb C}[[T_d]]^{S_d}$ and $M(f;T_d)$ are related by the following equality. If
\[
f(T_d)\prod_{i<j}(t_i-t_j)=\sum_{p_i\geq 0}b(p_1,\ldots,p_d)t_1^{p_1}\cdots t_d^{p_d},
\quad b(p_1,\ldots,p_d)\in {\mathbb C},
\]
then
\[
M(f;T_d)=\frac{1}{t_1^{d-1}\cdots t_{d-2}^2t_{d-1}}
\sum_{p_i>p_{i+1}}b(p_1,\ldots,p_d)t_1^{p_1}\cdots t_d^{p_d},
\]
where the summation is on all $p=(p_1,\ldots,p_d)$ such that
$p_1>p_2>\cdots>p_d$.
\end{lemma}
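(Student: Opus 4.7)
The plan is to use the standard bialternant/Jacobi--Trudi description of Schur functions recorded in the paper, namely
\[
S_{\lambda}(T_d)=\frac{V(\lambda+\delta)}{V(\delta)}, \qquad \prod_{i<j}(t_i-t_j)=V(\delta),
\]
with $\delta=(d-1,d-2,\ldots,1,0)$. Multiplying the given expansion $f(T_d)=\sum_\lambda m_\lambda S_\lambda(T_d)$ by the Vandermonde $\prod_{i<j}(t_i-t_j)$ turns the right-hand side into $\sum_\lambda m_\lambda V(\lambda+\delta)$, so the identity to prove becomes a comparison of two sides, each presented as an explicit power-series.

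Next I would expand each determinant:
\[
V(\lambda+\delta)=\sum_{\sigma\in S_d}\mathrm{sgn}(\sigma)\,t_{\sigma(1)}^{\lambda_1+d-1}t_{\sigma(2)}^{\lambda_2+d-2}\cdots t_{\sigma(d)}^{\lambda_d},
\]
and isolate among these monomials those whose exponents, read in the fixed variable order $t_1,\ldots,t_d$, are \emph{strictly decreasing}. The key combinatorial observation is that the sequence $(\lambda_i+d-i)_{i=1}^d$ is itself strictly decreasing (because $\lambda$ is a partition), hence the rearranged exponent vector $(\lambda_{\sigma^{-1}(1)}+d-\sigma^{-1}(1),\ldots,\lambda_{\sigma^{-1}(d)}+d-\sigma^{-1}(d))$ is strictly decreasing if and only if $\sigma^{-1}=\mathrm{id}$, i.e.\ $\sigma=\mathrm{id}$. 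Thus within $V(\lambda+\delta)$ precisely one monomial satisfies $p_1>p_2>\cdots>p_d$, namely $t_1^{\lambda_1+d-1}t_2^{\lambda_2+d-2}\cdots t_d^{\lambda_d}$, and its sign is $+1$.

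Summing over $\lambda$, the strictly-decreasing part of $f(T_d)\prod_{i<j}(t_i-t_j)$ equals
\[
\sum_\lambda m_\lambda\, t_1^{\lambda_1+d-1}t_2^{\lambda_2+d-2}\cdots t_{d-1}^{\lambda_{d-1}+1}t_d^{\lambda_d},
\]
and this is exactly the inner sum appearing on the right-hand side of the lemma. Dividing by the monomial $t_1^{d-1}t_2^{d-2}\cdots t_{d-1}$ peels off the shift by $\delta$ and leaves $\sum_\lambda m_\lambda t_1^{\lambda_1}\cdots t_d^{\lambda_d}=M(f;T_d)$, as required.

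The argument is essentially a one-line manipulation once the bialternant formula is in place; the only delicate point is the combinatorial claim that the identity permutation is the unique $\sigma$ contributing a strictly decreasing exponent sequence. I expect this to be the main (but mild) obstacle, and it is handled by the monotonicity of $i\mapsto\lambda_i+d-i$ noted above. Convergence issues do not arise because we are working in $\mathbb{C}[[T_d]]$ and for each multi-degree $(p_1,\ldots,p_d)$ only finitely many $\lambda$ can contribute to the coefficient $b(p_1,\ldots,p_d)$.
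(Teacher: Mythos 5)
Your argument is correct: writing $f\prod_{i<j}(t_i-t_j)=\sum_\lambda m_\lambda V(\lambda+\delta)$ via the bialternant formula, noting that in the expansion $V(\lambda+\delta)=\sum_\sigma \operatorname{sign}(\sigma)\prod_i t_{\sigma(i)}^{\lambda_i+d-i}$ only $\sigma=\mathrm{id}$ produces a strictly decreasing exponent vector (because $i\mapsto\lambda_i+d-i$ is strictly decreasing), and then dividing by $t_1^{d-1}\cdots t_{d-1}$ to undo the shift by $\delta$, is exactly the standard derivation. The paper itself gives no proof of this lemma, citing Berele \cite{B2} instead, so there is nothing to compare against; your write-up correctly supplies the expected argument, including the key uniqueness-of-$\sigma$ point and the observation that no non-identity term can ever contribute to a coefficient $b(p_1,\ldots,p_d)$ with $p_1>\cdots>p_d$.
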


\begin{remark}\label{how to check the proof}
In the general case, it is difficult to find $M(f;T_d)$ if we know $f(T_d)$. But
it is very easy to check whether the formal power series
\[
h(T_d)=\sum h(q_1,\ldots,q_d)t_1^{q_1}\cdots t_d^{q_d},\quad
q_1\geq\cdots\geq q_d,
\]
is equal to the multiplicity series $M(f;T_d)$ of $f(T_d)$ because
$h(T_d)=M(f;T_d)$ if and only if
\[
f(T_d)\prod_{i<j}(t_i-t_j)=\sum_{\sigma\in S_d}\text{\rm sign}(\sigma)
t_{\sigma(1)}^{d-1}t_{\sigma(2)}^{d-2}\cdots t_{\sigma(d-1)}
h(t_{\sigma(1)},\ldots,t_{\sigma(d)}).
\]
This equation can be used to verify the computational results on multiplicities.
\end{remark}

The Young rule in representation theory of the symmetric groups describes
in the language of Young diagrams the induced $S_{m+n}$-character of the
$S_m\times S_n$-character $\chi_{(m)}\otimes\chi_{\mu}$, $\mu\vdash n$
(which is equal to the outer tensor product $\chi_{(m)}\widehat\otimes \chi_{\mu}$).
In the special case $m=1$ it is equivalent to the Branching theorem for the induced $S_{n+1}$-character
of $\chi_{\mu}$, $\mu\vdash n$.
Translated in the language of Schur functions the Young rule is stated as
\[
S_{(m)}(T_d)S_{\mu}(T_d)=\sum_{\lambda}S_{\lambda}(T_d),
\]
where the summation is over all partitions $\lambda$ such that the skew diagram
$[\lambda/\mu]$ is a horizontal strip of size $m$, i.e.,
\[
\lambda_1+\cdots+\lambda_d=\mu_1+\cdots+\mu_d+m,
\]
\[
\lambda_1\geq\mu_1\geq \lambda_2\geq\mu_2\geq\cdots\geq \lambda_d\geq\mu_d.
\]
Regev \cite{R3} introduced the notion of Young-derived sequences of $S_n$-characters.
The sequence $\zeta_n$, $n=0,1,2,\ldots$, is Young-derived if it is obtained from another sequence
of $S_k$-characters $\xi_k$, $k=0,1,2,\ldots$, by applying the Young rule:
\[
\zeta_n=\sum_{k=0}^n\xi_{(n-k)}\widehat\otimes\xi_k,\quad n=0,1,2,\ldots
\]
In terms of symmetric functions this means that the symmetric function
\[
f(T_d)=\sum_{\lambda}m_{\lambda}S_{\lambda}(T_d)
\]
is Young-derived from
\[
g(T_d)=\sum_{\lambda}p_{\mu}S_{\mu}(T_d)
\]
if and only if the multiplicities $m_{\lambda}$ and $p_{\mu}$ are related with the condition
\[
m_{(\lambda_1,\ldots,\lambda_d)}=\sum p_{(\mu_1,\ldots,\mu_d)},
\quad
\lambda_1\geq\mu_1\geq \cdots\geq \lambda_d\geq\mu_d.
\]
The well known equality
\[
\prod_{i=1}^d\frac{1}{1-t_i}=\sum_{m\geq 0}S_{(m)}(T_d)
\]
gives that this is equivalent to the equality
\[
f(T_d)=g(T_d)\prod_{i=1}^d\frac{1}{1-t_i}.
\]
Let $Y$ be the linear operator in ${\mathbb C}[[V_d]]\subset {\mathbb C}[[T_d]]$ which sends the multiplicity series of
the symmetric function $g(T_d)$ to the multiplicity series of its Young-derived
$f(T_d)$:
\[
Y(M(g);T_d)=M(f;T_d)=M\left(g(T_d)\prod_{i=1}^d\frac{1}{1-t_i};T_d\right).
\]
The following proposition describes the multiple action of $Y$ on 1. We believe that it is folklorically known
although we were not able to find references.

\begin{proposition}\label{result about multiplicities}
For $d\geq k\geq 1$ the following decomposition holds
\[
\prod_{i=1}^d\frac{1}{(1-t_i)^k}=\sum_{\mu}n_{\mu}S_{\mu}(T_d),
\]
where the summation is on all partitions $\mu=(\mu_1,\ldots,\mu_k)$ and
\[
n_{\mu}=S_{\mu}(\underbrace{1,\ldots,1}_{k\text{ \rm times}})=\dim(W_k(\mu)).
\]
Equivalently,
\[
Y^k(1)=\sum_{\mu}\dim(W_k(\mu))T_k^{\mu},
\quad \mu=(\mu_1,\ldots,\mu_k),\quad k\geq 1.
\]
\end{proposition}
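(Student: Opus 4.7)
The plan is to proceed by induction on $k$, using the Young rule stated just before the proposition together with the classical branching rule for the chain $GL_k\subset GL_{k+1}$. The base case $k=1$ is exactly the identity $\prod_{i=1}^d 1/(1-t_i)=\sum_{m\geq 0}S_{(m)}(T_d)$ already recorded in the excerpt, since $\dim W_1((m))=1$ for every $m\geq 0$.

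For the inductive step, I would multiply the assumed decomposition
\[
\prod_{i=1}^d\frac{1}{(1-t_i)^k}=\sum_{\mu}\dim(W_k(\mu))S_{\mu}(T_d),\quad \ell(\mu)\leq k,
\]
by $\prod_{i=1}^d 1/(1-t_i)=\sum_{m\geq 0}S_{(m)}(T_d)$ and expand via the Young rule. Regrouping the result by the outer partition $\lambda$, the coefficient of $S_{\lambda}(T_d)$ in $\prod_{i=1}^d 1/(1-t_i)^{k+1}$ becomes
\[
\sum_{\mu}\dim(W_k(\mu)),
\]
where the summation is over $\mu=(\mu_1,\ldots,\mu_k)$ with $\lambda_i\geq\mu_i\geq\lambda_{i+1}$ for all $i$, since such interlacing is exactly the condition that $[\lambda/\mu]$ be a horizontal strip and that $\mu$ have at most $k$ parts.

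The main step is then to identify this sum with $\dim(W_{k+1}(\lambda))$. This is the classical $GL_{k+1}\downarrow GL_k$ branching rule, and it is most naturally proved by the semistandard tableau description $S_\lambda(1,\ldots,1)=\dim(W_{k+1}(\lambda))$ already mentioned in the excerpt. A semistandard $\lambda$-tableau with entries in $\{1,\ldots,k+1\}$ is uniquely encoded by the subtableau of entries $\leq k$, which is a semistandard $\mu$-tableau for some $\mu$ with $\mu_i$ counting the entries $\leq k$ in row $i$, together with the positions of the $(k+1)$'s. Since $(k+1)$'s are the largest letters and cannot repeat in a column, they fill a horizontal strip $[\lambda/\mu]$, which forces precisely the interlacing $\lambda_i\geq\mu_i\geq\lambda_{i+1}$. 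Counting in two ways yields $\dim(W_{k+1}(\lambda))=\sum_{\mu}\dim(W_k(\mu))$ with the required constraints, completing the induction. The main obstacle, if one insists on a self-contained derivation, is precisely this combinatorial branching identity; everything else is formal manipulation.

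For the final equivalent statement, observe that $1=S_{\emptyset}(T_d)$ has multiplicity series $M(1;T_d)=1$, and the definition of $Y$ gives by iteration
\[
Y^k(1)=M\!\left(\prod_{i=1}^d\frac{1}{(1-t_i)^k};T_d\right).
\]
Substituting the Schur decomposition just proved and reading off the multiplicity series (using that $\mu$ has at most $k$ parts, so $T_d^{\mu}=T_k^{\mu}$) yields $Y^k(1)=\sum_{\mu}\dim(W_k(\mu))T_k^{\mu}$, as claimed.
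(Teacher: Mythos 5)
Your proof is correct and rests on the same essential mechanism as the paper's: iterating the Young rule one factor of $\prod_i(1-t_i)^{-1}$ at a time and identifying the resulting chains of horizontal strips with semistandard tableaux counted by $S_\mu(1,\ldots,1)$. The only difference is organizational — you package the argument as a formal induction on $k$ with the $GL_{k+1}\downarrow GL_k$ branching identity isolated as the key lemma, whereas the paper unrolls the same induction into a single direct bijection between summands of $S_{(m_1)}\cdots S_{(m_k)}$ and semistandard tableaux of content $(m_1,\ldots,m_k)$.
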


\begin{proof}
Clearly
\[
\prod_{i=1}^d\frac{1}{(1-t_i)^k}=\sum S_{(m_1)}(T_d)\cdots S_{(m_k)}(T_d),
\]
where the summation is on all $k$-tuples of nonnegative integers $(m_1,\ldots,m_k)$.
By the Young rule
\[
S_{(m_1)}(T_d)S_{(m_2)}(T_d)=\sum S_{\pi}(T_d),
\]
where the sum is on all partitions $\pi=(\pi_1,\pi_2)\vdash m_1+m_2$ such that
$\pi_1\geq m_1$ and the skew diagram $[\pi/(m_1)]$ is a horizontal strip.
We fill in the entries of $[(m_1)]$ and $[(m_2)]$ with 1's and 2's, respectively.
Then we fill in with 1's and 2's the boxes of $[\pi]$ corresponding
to the boxes of $[(m_1)]$ and $[(m_2)]$, respectively:
\bigskip
\begin{center}
{\normalsize \Yvcentermath1
$\displaystyle \young(1\cdots1)\otimes \young(2\cdots 2) =
\sum \young(1\cdots11\cdots12\cdots2,2\cdots2)
$
}
\end{center}

\bigskip

\noindent As a result, we obtain a bijection between the summands $S_{\pi}(T_d)$ in the decomposition of the product
$S_{(m_1)}(T_d)S_{(m_2)}(T_d)$ and the semistandard tableaux of content $(m_1,m_2)$. In the next step, the product
of three Schur functions has the form
\[
S_{(m_1)}(T_d)S_{(m_2)}(T_d)S_{(m_3)}(T_d)=\sum S_{\rho}(T_d),
\]
where the sum is on all partitions $\rho=(\rho_1,\rho_2,\rho_3)\vdash m_1+m_2+m_3$ which contain
a partition $\pi=(\pi_1,\pi_2)\vdash m_1+m_2$ such that the skew diagrams $[\pi/(m_1)]$
and $[\rho/\pi]$ are horizontal strips. The Schur function $S_{\rho}(T_d)$ participates in the sum
as many time as the possible ways to choose the partition $\pi$. Hence $S_{\rho}(T_d)$
appears in the sum with its multiplicity in the decomposition of $S_{(m_1)}(T_d)S_{(m_2)}(T_d)S_{(m_3)}(T_d)$.
Again, filling in the entries of $[(m_1)]$, $[(m_2)]$ and $[(m_3)]$ with 1's, 2's and 3's, respectively,
we obtain a bijection between the summands $S_{\rho}(T_d)$ of $S_{(m_1)}(T_d)S_{(m_2)}(T_d)S_{(m_3)}(T_d)$
and the semistandard tableaux of content $(m_1,m_2,m_3)$:

\bigskip
\begin{center}
{\normalsize \Yvcentermath1
$\displaystyle \young(1\cdots1)\otimes \young(2\cdots 2)\otimes \young(3\cdots 3) =
\sum \young(1\cdots11\cdots11\cdots22\cdots23\cdots3,2\cdots22\cdots23\cdots3,3\cdots3)
$
}
\end{center}

\bigskip

\noindent This bijection preserves the shape of the partitions and $S_{\rho}(T_d)$ is mapped to a $\rho$-tableau.
Continuing in this way we obtain a bijection between the summands $S_{\mu}(T_d)$ in the decomposition of the product
$S_{(m_1)}(T_d)\cdots S_{(m_k)}(T_d)$ and the semistandard tableaux of content $(m_1,\ldots,m_k)$.
This bijection counts the multiplicity of $S_{\mu}(T_d)$ and preserves the shape of $\mu$.
Hence the multiplicity of $S_{\mu}(T_d)$ in the decomposition of $\prod_{i=1}^d1/(1-t_i)^k$ is equal to the number of
semistandard $\mu$-tableaux which is equal to $S_{\mu}(1,\ldots,1)$ and to the dimension of the $GL_k$-module $W_k(\mu)$.
The equivalence of both statements of the proposition is obvious.
\end{proof}

\begin{remark}\label{comments on classical invariant theory}
The multiplicities in the decomposition of $\prod 1/(1-t_i)^k$
appear naturally in classical invariant theory. For example, see e.g. De Concini, Eisenbud and Procesi \cite{DEP},
if $W$ is a direct sum of irreducible $GL_d$-modules $W_d(\lambda)$, the vector subspace $W^{UT_d}$ of the
invariants of the subgroup $UT_d=UT_d(K)$ of all upper unitriangular matrices of $GL_d$
is spanned by elements $w_{\lambda}\in W_d(\lambda)$, where up to a multiplicative constant the element
$w_{\lambda}$ in $W_d(\lambda)$ is the only element with the property
\[
\text{diag}(\xi_1,\ldots,\xi_d)(w_{\lambda})=\Xi^{\lambda}w_{\lambda}=\xi_1^{\lambda_1}\cdots\xi_d^{\lambda_d}w_{\lambda}
\]
and $\text{diag}(\xi_1,\ldots,\xi_d)\in GL_d$ is the diagonal matrix with the elements $\xi_1,\ldots,\xi_d$ at the diagonal.
The vector subspace $W^{SL_d}\subset W^{UT_d}$ of the invariants of $SL_d=SL_d(K)$ is spanned by the elements $w_{\lambda}$ such that
$\lambda=(\lambda_1,\ldots,\lambda_1)$ is a partition in $d$ equal parts. The action of the diagonal subgroup of $GL_d$
on the symmetric algebra $K[W_d(1)^{\oplus k}]$ of
$k$ copies of the canonical $d$-dimensional $GL_d$-module $W_d(1)$ induces a ${\mathbb Z}^d$-grading.
The algebra $K[W_d(1)^{\oplus k}]^{UT_d}$ of $UT_d$-invariants is a graded subalgebra of $K[W_d(1)^{\oplus k}]$. Its Hilbert series is
\[
H(K[W_{1}^{\oplus k}]^{UT_d},T_d)=\sum_{\mu}n_{\mu}T_d^{\mu},
\]
where $n_{\mu}$ is the multiplicity of $S_{\mu}(T_d)$ in $\prod 1/(1-t_i)^k$, i.e. is equal to the corresponding multiplicity series:
\[
H(K[W_{1}^{\oplus k}]^{UT_d},T_d)=M\left(\prod_{i=1}^d\frac{1}{(1-t_i)^k};T_d\right).
\]
Similarly, the algebra $K[W_{1}^{\oplus k}]^{SL_d}$ of $SL_d$-invariants is $\mathbb Z$-graded and its Hilbert series is
\[
H(K[W_{1}^{\oplus k}]^{SL_d},t)=\sum_{\mu}n_{(\mu_1,\ldots,\mu_1)}t^{\mu_1d}=M'\left(\prod_{i=1}^d\frac{1}{(1-t_i)^k};0,\ldots,0,t^d\right),
\]
where $M'(f(T_d);V_d)$ is the multiplicity series of the symmetric function $f(T_d)$ with respect to the variables
$v_i=t_1\cdots t_i$, $i=1,\ldots,d$.
\end{remark}

In the general case
there is  an easy formula which translates Young-derived sequences
in the language of multiplicity series.

\begin{proposition}\label{multiplicity series of Young-derived}
{\rm (Drensky and Genov \cite{DG1})}
Let $f(T_d)$ be the Young-derived of the symmetric function $g(T_d)$.
Then
\[
Y(M(g);T_d)=M(f;T_d)=M\left(g(T_d)\prod_{i=1}^d\frac{1}{1-t_i};T_d\right)
\]
\[
=\prod_{i=1}^d\frac{1}{1-t_i}\sum(-t_2)^{\varepsilon_2}\ldots(-t_d)^{\varepsilon_d}
M(g;t_1t_2^{\varepsilon_2},t_2^{1-\varepsilon_2}t_3^{\varepsilon_3}\ldots
t_{d-1}^{1-\varepsilon_{d-1}}t_d^{\varepsilon_d},t_d^{1-\varepsilon_d}),
\]
where the summation runs on all $\varepsilon_2,\ldots,\varepsilon_d=0,1$.
\end{proposition}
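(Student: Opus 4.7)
The plan is to start from the Schur expansion $g(T_d)=\sum_\mu p_\mu S_\mu(T_d)$ and apply the Young rule quoted in the preliminaries term-by-term. This identifies the multiplicity of $S_\lambda(T_d)$ in $f(T_d)=g(T_d)\prod_{i=1}^d(1-t_i)^{-1}$ as $m_\lambda=\sum p_\mu$, where the sum runs over all $\mu$ that interlace $\lambda$ in the sense $\lambda_1\geq\mu_1\geq\lambda_2\geq\mu_2\geq\cdots\geq\lambda_d\geq\mu_d$. Plugging this into $M(f;T_d)=\sum_\lambda m_\lambda t_1^{\lambda_1}\cdots t_d^{\lambda_d}$ and interchanging the order of summation places $\mu$ on the outside.

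Next I would evaluate the resulting inner sum over $\lambda$. With $\mu$ fixed, the interlacing conditions decouple coordinate by coordinate: $\lambda_1$ ranges over $\{\mu_1,\mu_1+1,\ldots\}$, while for $2\leq i\leq d$ the coordinate $\lambda_i$ runs over the finite interval $\mu_i\leq\lambda_i\leq\mu_{i-1}$. Each one-dimensional sum is geometric, producing
\[
\sum_{\lambda} t_1^{\lambda_1}\cdots t_d^{\lambda_d}=\frac{1}{\prod_{i=1}^d(1-t_i)}\cdot t_1^{\mu_1}\prod_{i=2}^d\bigl(t_i^{\mu_i}-t_i^{\mu_{i-1}+1}\bigr).
\]

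The decisive step is to expand each two-term factor by introducing $\varepsilon_i\in\{0,1\}$ and writing $t_i^{\mu_i}-t_i^{\mu_{i-1}+1}=\sum_{\varepsilon_i}(-t_i)^{\varepsilon_i}\,t_i^{(1-\varepsilon_i)\mu_i+\varepsilon_i\mu_{i-1}}$. Pulling the $2^{d-1}$ prefactors $\prod_{i\geq 2}(-t_i)^{\varepsilon_i}$ outside the sum over $\mu$, I would then, for each fixed tuple $\varepsilon=(\varepsilon_2,\ldots,\varepsilon_d)$, regroup the total exponent of each $\mu_j$ across the monomial $t_1^{\mu_1}\prod_{i\geq 2}t_i^{(1-\varepsilon_i)\mu_i+\varepsilon_i\mu_{i-1}}$. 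A direct bookkeeping shows that $\mu_1$ is paired with $(t_1t_2^{\varepsilon_2})^{\mu_1}$, each $\mu_j$ with $2\leq j\leq d-1$ with $(t_j^{1-\varepsilon_j}t_{j+1}^{\varepsilon_{j+1}})^{\mu_j}$, and $\mu_d$ with $(t_d^{1-\varepsilon_d})^{\mu_d}$. These are precisely the substitutions appearing inside $M(g;\cdot)$ in the statement, so the sum over $\mu$ becomes $M(g;t_1t_2^{\varepsilon_2},\ldots,t_d^{1-\varepsilon_d})$ and the advertised formula falls out.

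The main obstacle will be the combinatorial bookkeeping of which $t_j$-power attaches to which $\mu_j$ across the $2^{d-1}$ choices of $\varepsilon$; the signs are a separate but minor bookkeeping task. Well-definedness of each substituted $M(g;\cdot)$ as an element of ${\mathbb C}[[T_d]]$ is automatic, since for any partition $\mu$ the entries $\mu_2,\ldots,\mu_d$ are bounded by $\mu_1$, so each fixed total degree in $t_1,\ldots,t_d$ receives only finitely many contributions from the sum over $\mu$.
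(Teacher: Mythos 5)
The paper states this proposition without proof, attributing it to Drensky and Genov \cite{DG1}, so there is no internal proof to compare against. Your argument is correct and complete, and it is the natural (and, in substance, the cited) proof: the interlacing form of the Young rule, the decoupled geometric sums giving $t_1^{\mu_1}\prod_{i\geq 2}(t_i^{\mu_i}-t_i^{\mu_{i-1}+1})/\prod_i(1-t_i)$, the two-term expansion indexed by $\varepsilon_i\in\{0,1\}$ with the exponent bookkeeping pairing $\mu_1$ with $t_1t_2^{\varepsilon_2}$, $\mu_j$ with $t_j^{1-\varepsilon_j}t_{j+1}^{\varepsilon_{j+1}}$, and $\mu_d$ with $t_d^{1-\varepsilon_d}$, plus the observation that $\mu_j\leq\mu_1$ guarantees the substituted series are well defined, are exactly the required ingredients.
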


\begin{remark}\label{Young rule to p rows}
Applied to $S_{\mu}(T_d)$ when $\mu$ is a partition in $\leq p$ parts
the Young rule gives a sum of Schur functions $S_{\lambda}(T_d)$ for partitions $\lambda$ in $\leq p+1$ parts.
Hence, if the multiplicity series $M(g;T_d)$ of $g(T_d)$ does not depend on $t_{p+1}$, then the multiplicity series
of its Young-derived $Y(M(g);T_d)$ does not depend on $t_{p+2}$.
\end{remark}

The proof of the following lemma can be derived directly from the Young rule, compare with the computing of
the Hilbert series of the free metabelian Lie algebra in \cite{D5}.
Instead, we shall apply Proposition \ref{multiplicity series of Young-derived}.

\begin{lemma}\label{sum S(n-1,1)}
The following equality holds:
\[
\sum_{n\geq 2}S_{(n-1,1)}(T_d)=1+(t_1+\cdots+t_d-1)\prod_{i=1}^d\frac{1}{1-t_i}.
\]
\end{lemma}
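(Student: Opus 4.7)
The plan is to apply Proposition \ref{multiplicity series of Young-derived} to the Young-derived $f(T_d)=S_{(1)}(T_d)\prod_{i=1}^d\frac{1}{1-t_i}$ of $g(T_d)=S_{(1)}(T_d)=t_1+\cdots+t_d$. The multiplicity series of $g$ is simply $M(g;T_d)=t_1$, a one-variable quantity, so the sum over $(\varepsilon_2,\ldots,\varepsilon_d)\in\{0,1\}^{d-1}$ in the formula of the Proposition is forced to collapse: only the first new argument $t_1t_2^{\varepsilon_2}$ is seen by $M(g;\cdot)$, and each factor $(1-t_i)$ coming from the $\varepsilon_i$-sum for $i\geq 3$ cancels against the matching factor of the prefactor $\prod_{i=1}^d\frac{1}{1-t_i}$. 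The remaining $\varepsilon_2$-sum produces $1-t_2^2$, and after the obvious cancellation I expect
\[
M(f;T_d)=\frac{t_1(1+t_2)}{1-t_1}=\sum_{n\geq 1}t_1^n+\sum_{n\geq 1}t_1^nt_2.
\]

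The two summands on the right are exactly the multiplicity series of $\sum_{n\geq 1}S_{(n)}(T_d)$ and of $\sum_{n\geq 1}S_{(n,1)}(T_d)$ (since the multiplicity series of $S_\lambda(T_d)$ is $t_1^{\lambda_1}\cdots t_d^{\lambda_d}$). By the injectivity of the multiplicity-series map on symmetric functions noted in the preliminaries, this pins down
\[
S_{(1)}(T_d)\prod_{i=1}^d\frac{1}{1-t_i}=\sum_{n\geq 1}S_{(n)}(T_d)+\sum_{n\geq 1}S_{(n,1)}(T_d).
\]
The well-known identity $\sum_{n\geq 0}S_{(n)}(T_d)=\prod_{i=1}^d\frac{1}{1-t_i}$ evaluates the first sum as $\prod_{i=1}^d\frac{1}{1-t_i}-1$, and the index shift $n\mapsto n-1$ rewrites $\sum_{n\geq 1}S_{(n,1)}(T_d)=\sum_{n\geq 2}S_{(n-1,1)}(T_d)$. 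Subtracting the single-row sum from $f(T_d)$ yields the stated identity after a line of algebra.

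There is no serious obstacle here, since this is a short verification inside the multiplicity-series formalism. The only delicate bookkeeping point is the convention that $(0,1)$ is not a partition, which forces the naive $m=0$ term of $\sum_{m\geq 0}S_{(m,1)}$ to vanish and makes the final index shift $n\mapsto n-1$ match the statement. One could instead give a direct one-line proof via Pieri's rule $S_{(m)}(T_d)S_{(1)}(T_d)=S_{(m+1)}(T_d)+S_{(m,1)}(T_d)$ summed over $m\geq 0$, but the route through Proposition \ref{multiplicity series of Young-derived} is more in the spirit of the algorithm that the rest of the paper will exploit.
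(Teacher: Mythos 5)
Your proposal is correct and follows essentially the same route as the paper: both start from $M(S_{(1)};T_d)=t_1$, apply Proposition \ref{multiplicity series of Young-derived} to get $Y(t_1)=\frac{t_1(1+t_2)}{1-t_1}$, read off the multiplicity series of $\sum_{n\geq 1}S_{(n)}+\sum_{n\geq 2}S_{(n-1,1)}$, and rearrange using $\sum_{m\geq 0}S_{(m)}=\prod_i(1-t_i)^{-1}$. The only cosmetic difference is that the paper reduces to $d=2$ at the outset, whereas you keep general $d$ and observe that the $\varepsilon_i$-sums for $i\geq 3$ cancel against the prefactor; even your closing remark about the Pieri-rule shortcut mirrors the paper's own comment preceding the lemma.
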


\begin{proof}
Starting with $M(S_{(1)};T_d)=t_1$, we calculate $Y(t_1;T_d)$. It is sufficient to work for $d=2$.
\[
M\left(\frac{S_{(1)}}{(1-t_1)(1-t_2)};t_1,t_2\right)
=Y(t_1;T_2)=\frac{t_1-t_2(t_1t_2)}{(1-t_1)(1-t_2)}=\frac{t_1+t_1t_2}{1-t_1}
\]
\[
=\sum_{n\geq 0}t^n-1+\sum_{n\geq 2}t_1^{n-1}t_2
\]
\[
=M\left(\sum_{n\geq 0}S_{(n)}-1+\sum_{n\geq 2}S_{(n-1,1)};t_1,t_2\right).
\]
Since the answer is the same for all $d\geq 2$, this is equivalent to
\[
S_{(1)}(T_d)\prod_{i=1}^d\frac{1}{1-t_i}=\prod_{i=1}^d\frac{1}{1-t_i}-1
+\sum_{n\geq 2}S_{(n-1,1)}(T_d),
\]
and this completes the proof because
\[
S_{(1)}(T_d)=t_1+\cdots+t_d.
\]
\end{proof}

The following proposition expresses the Hilbert series of the product of two T-ideals
in terms of the Hilbert series of the factors, and gives the corresponding relation
for the Hilbert series of the relatively free algebras.

\begin{proposition}\label{Hilbert series of products of ideals}
{\rm (Formanek \cite{F2}, see also Halpin \cite{H})}
Let $R_1, R_2$ and $R$ be PI-algebras such that $T(R)=T(R_1)T(R_2)$. Then
\[
H(K\langle X_d\rangle,T_d)H(K\langle X_d\rangle\cap T(R),T_d)
=H(K\langle X_d\rangle\cap T(R_1),T_d)H(K\langle X_d\rangle\cap T(R_2),T_d),
\]
\[
H(F_d(R),T_d)=H(F_d(R_1),T_d)+H(F_d(R_2),T_d)
\]
\[
+(t_1+\cdots+t_d-1)H(F_d(R_1),T_d)H(F_d(R_2),T_d).
\]
\end{proposition}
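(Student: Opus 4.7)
My plan is to set $A = K\langle X_d\rangle$, $I_i = A \cap T(R_i)$ for $i=1,2$, and $I = A \cap T(R)$. The approach reduces the Hilbert-series identity to a fact about tensor products over the graded free algebra $A$ by exploiting that $A$ has global dimension $1$.

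First I would identify $I$ with the honest product $I_1 I_2$ taken inside $A$. The inclusion $I_1 I_2 \subseteq I$ is automatic from $T(R) = T(R_1)T(R_2)$. For the reverse, the $K$-algebra endomorphism $\pi : K\langle X\rangle \to A$ sending $x_i \mapsto x_i$ for $i \leq d$ and $x_j \mapsto 0$ for $j > d$ fixes $A$ pointwise; since T-ideals are stable under endomorphisms of the free algebra, $\pi(T(R_j)) \subseteq I_j$. Applying $\pi$ to any presentation $\sum_k u_k v_k$ of an element of $I$ with $u_k \in T(R_1)$, $v_k \in T(R_2)$ produces a decomposition inside $I_1 I_2$.

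The heart of the argument is the identity
\[
H(A,T_d) \cdot H(I,T_d) = H(I_1,T_d) \cdot H(I_2,T_d).
\]
To establish it I would use the short free resolution
\[
0 \to A \otimes_K V \to A \to K \to 0, \qquad V = Kx_1 \oplus \cdots \oplus Kx_d,
\]
of the trivial right $A$-module, which shows that $A$ is hereditary and hence every graded right ideal of $A$ --- in particular $I_1$ --- is graded free as a right $A$-module. Write $\rho(T_d) = H(I_1,T_d)/H(A,T_d)$ for the Hilbert series of a homogeneous right basis of $I_1$. Flatness of $I_1$ lets me tensor the short exact sequence $0 \to I_2 \to A \to F_d(R_2) \to 0$ of right $A$-modules on the left with $I_1$ to obtain
\[
0 \to I_1 \otimes_A I_2 \to I_1 \to I_1 \otimes_A F_d(R_2) \to 0,
\]
whose image in $I_1$ is exactly $I_1 I_2 = I$. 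Freeness of $I_1$ gives $H(I_1 \otimes_A F_d(R_2),T_d) = \rho(T_d)\cdot H(F_d(R_2),T_d)$, and adding Hilbert series along this exact sequence together with $H(F_d(R_2)) = H(A) - H(I_2)$ yields the first displayed identity.

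The second identity is then formal: using $H(F_d(R_i)) = H(A) - H(I_i)$ and the key relation $1/H(A,T_d) = 1 - (t_1 + \cdots + t_d)$, rewrite $H(F_d(R)) = H(A) - H(I_1)H(I_2)/H(A)$, substitute $H(I_i) = H(A) - H(F_d(R_i))$, and expand. The principal obstacle is the step invoking freeness (equivalently, flatness) of $I_1$ as a right $A$-module; this is essentially Cohn's theorem that the free associative algebra is a free ideal ring, but in the graded setting one needs only the resolution above, which gives $\mathrm{gl.dim}\,A = 1$ and hence projectivity --- and thus flatness --- of every graded right ideal. That is all the Hilbert series computation requires.
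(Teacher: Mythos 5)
Your argument is correct, but it does substantially more than the paper: for the first identity the paper simply cites Formanek \cite{F2} and Halpin \cite{H}, and its own proof consists only of the formal derivation of the second identity from the first, using $H(K\langle X_d\rangle,T_d)=1/(1-(t_1+\cdots+t_d))$ and $H(F_d(A),T_d)=H(K\langle X_d\rangle,T_d)-H(K\langle X_d\rangle\cap T(A),T_d)$ --- exactly the computation you perform at the end. Your reconstruction of the first identity is essentially the Formanek--Halpin argument made explicit: the reduction $K\langle X_d\rangle\cap T(R_1)T(R_2)=I_1I_2$ via the retraction $\pi$ killing $x_j$ for $j>d$ (using stability of T-ideals under endomorphisms) is the correct way to pass from T-ideals of $K\langle X\rangle$ to their restrictions in $d$ variables, and the identification $I_1\otimes_A I_2\cong I_1I_2$ combined with graded freeness of one-sided ideals of $A=K\langle X_d\rangle$ (a graded instance of the fir property, which, as you note, already follows from the resolution $0\to V\otimes_K A\to A\to K\to 0$) is the standard engine behind the multiplicativity of the Hilbert series. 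Two small points to tidy up: the sequence $0\to I_2\to A\to F_d(R_2)\to 0$ that you tensor with the right module $I_1$ should be read as a sequence of \emph{left} $A$-modules (harmless, since the $I_j$ are two-sided ideals); and the step $H(I_1\otimes_A F_d(R_2),T_d)=\rho(T_d)\,H(F_d(R_2),T_d)$ genuinely needs $I_1$ to be graded free on a homogeneous basis rather than merely flat --- which your resolution does deliver, since a graded projective module over a connected graded algebra is graded free. In short, your route buys a self-contained proof at the cost of homological machinery; the paper's route buys brevity by leaving the hard half to the literature.
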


\begin{proof}
The first equation is presented in \cite{F2} and \cite{H}. The second equation follows immediately
from the first if we take into account that
\[
H(K\langle X_d\rangle,T_d)=\frac{1}{1-(t_1+\cdots+t_d)}
\]
and for any unital algebra $A$ the isomorphism
\[
F_d(A)\cong K\langle X_d\rangle/T(A)
\]
implies
\[
H(F_d(A),T_d)=H(K\langle X_d\rangle,T_d)-H(T(A),T_d).
\]
\end{proof}

Finally, there is an explicit description of the relatively free algebra
$F_d(U_k)$ and its Hilbert series in terms of tensor products of irreducible $GL_d$-modules
and products of Schur functions.

\begin{proposition}\label{formula of Drensky and Kasparian}
{\rm (Drensky and Kasparian \cite{DK}, see also \cite{D6})}
{\rm (i)} Let $W_d(\lambda)$ be the irreducible $GL_d$-module corresponding to the partition $\lambda$.
Then the following $GL_d$-module isomorphism holds:
\[
F_d(U_k)\cong \left(\bigoplus_{n\geq 0}W_d(n)\right)
\otimes\left(\bigoplus_{r=0}^{k-1}\bigoplus_{p_i\geq 2}W_d(p_1-1,1)\otimes\cdots\oplus W_d(p_r-1,1)\right).
\]
{\rm (ii)} The Hilbert series of $F_d(U_k)$ has the form
\[
H(F_d(U_k),T_d)=\prod_{i=1}^d\frac{1}{1-t_i}\sum_{r=0}^{k-1}\sum_{p_i\geq 2}S_{(p_1-1,1)}(T_d)\cdots S_{(p_r-1,1)}(T_d).
\]
\end{proposition}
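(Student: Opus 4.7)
The plan is to prove (ii) first, by induction on $k$, using Proposition \ref{Hilbert series of products of ideals} together with Lemma \ref{sum S(n-1,1)}, and then to deduce (i) from it by observing that a graded polynomial $GL_d$-module is determined by its character.

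For the base case $k=1$, since $U_1=K$ is commutative I have $F_d(U_1)=K[x_1,\ldots,x_d]\cong\bigoplus_{n\geq 0}W_d(n)$, whose Hilbert series is $\prod_{i=1}^d 1/(1-t_i)$. On the right-hand sides of (i) and (ii) only the $r=0$ contribution survives (the empty tensor product, respectively the empty product of Schur functions, each interpreted as $1$), so both statements hold.

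For the inductive step I would exploit the T-ideal factorisation $T(U_k)=T(U_{k-1})\,T(K)$, which follows from $T(U_k)=C^k$ (a consequence of Maltsev's identity recalled in the introduction). Applying Proposition \ref{Hilbert series of products of ideals} to $R_1=U_{k-1}$ and $R_2=K$, together with $H(F_d(K),T_d)=\prod_{i=1}^d 1/(1-t_i)$, yields
\[
H(F_d(U_k),T_d)=H(F_d(U_{k-1}),T_d)\Bigl(1+(t_1+\cdots+t_d-1)\prod_{i=1}^d\frac{1}{1-t_i}\Bigr)+\prod_{i=1}^d\frac{1}{1-t_i}.
\]
By Lemma \ref{sum S(n-1,1)} the parenthesised factor equals $\sum_{n\geq 2}S_{(n-1,1)}(T_d)$. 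Substituting the inductive formula for $H(F_d(U_{k-1}),T_d)$, the first term will produce the $r=1,\ldots,k-1$ contributions (appending a new factor $S_{(p_r-1,1)}$), while the trailing $\prod 1/(1-t_i)$ supplies the missing $r=0$ term, completing (ii).

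To pass from (ii) to (i) I would use that $F_d(U_k)$ is a graded polynomial $GL_d$-representation with finite-dimensional homogeneous components, hence is determined up to $GL_d$-isomorphism by its Hilbert series. The module on the right-hand side of (i) is manifestly of this type, and its character, computed by expanding each tensor product of irreducibles via the Littlewood--Richardson rule, is exactly the expression in (ii). The main care needed will be in the inductive bookkeeping (the reindexing $r\mapsto r+1$ combined with the isolated $r=0$ contribution); beyond this I do not foresee any genuine obstacle, since the factorisation $T(U_k)=T(U_{k-1})T(K)$ reduces everything to the two lemmas already at hand.
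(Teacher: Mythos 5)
Your argument is correct, but it takes a genuinely different route from the paper's. The paper proves both parts at once by citing two structural results: Drensky's theorem that $F_d(A)\cong K[X_d]\otimes B_d(A)$ for any unital PI-algebra $A$, where $B_d(A)$ is the space of proper polynomials, together with the Drensky--Kasparian decomposition $B_d(U_k)\cong\bigoplus_{r=0}^{k-1}\bigoplus_{p_i\geq 2}W_d(p_1-1,1)\otimes\cdots\otimes W_d(p_r-1,1)$; part (ii) is then just the translation of (i) into characters. You instead establish (ii) first, by induction on $k$ from Maltsev's factorization $T(U_k)=T(U_{k-1})T(K)$ and Formanek's recurrence (Proposition \ref{Hilbert series of products of ideals}), with Lemma \ref{sum S(n-1,1)} converting the factor $1+(t_1+\cdots+t_d-1)\prod_{i=1}^d 1/(1-t_i)$ into $\sum_{n\geq 2}S_{(n-1,1)}(T_d)$, and you recover (i) from (ii) by semisimplicity: a polynomial $GL_d$-module with finite-dimensional homogeneous components is determined up to isomorphism by its formal character, and the character of the right-hand side of (i) is visibly the right-hand side of (ii). Your induction is precisely the ``recurrent'' computation that the paper sketches as an alternative proof of Theorem \ref{Hilbert series of relatively free algebra} and then sets aside in favour of a direct proof via this very proposition, so you have in effect reversed the paper's logical order, deriving the proposition from the recurrence rather than the theorem from the proposition; there is no circularity in doing so, since the recurrence rests only on Maltsev's identity and Formanek's formula. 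What your route buys is independence from the explicit description of the proper polynomials of $U_k$ in \cite{DK}; what it gives up is the concrete module decomposition --- your (i) is an abstract isomorphism read off from characters, whereas the paper's reflects an actual basis of $F_d(U_k)$ (cf.\ Remark \ref{basis of relatively free algebra}). The bookkeeping you defer is indeed routine: multiplying the inductive formula by $\sum_{n\geq 2}S_{(n-1,1)}$ produces the terms with $r=1,\ldots,k-1$ after reindexing, and the leftover summand $\prod_{i=1}^d 1/(1-t_i)$ is exactly the $r=0$ term.
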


\begin{proof}
By a theorem of Drensky \cite{D2} (or \cite[Theorems 4.3.12 and 12.5.4]{D6})
if $A$ is any unital PI-algebra and $B_d(A)\subset F_d(A)$ is the vector space of the so called ``proper'' polynomials
in the relatively free algebra $F_d(A)$, then the Hilbert series of $F_d(A)$ and $B_d(A)$ are related by
\[
H(F_d(U_k),T_d)=H(K[X_d],T_d)H(B_d(U_k),T_d)=\prod_{i=1}^d\frac{1}{1-t_i}H(B_d(U_k),T_d)
\]
and the following $GL_d$-module isomorphism holds:
\[
F_d(A)\cong K[X_d]\otimes B_d(A)\cong \left(\bigoplus_{n\geq 0}W_d(n)\right)\otimes B_d(A).
\]
Now we apply the decomposition in \cite{DK}, see also \cite[Theorem 12.5.6]{D6},
\[
B_d(U_k)\cong \bigoplus_{r=0}^{k-1}\bigoplus_{p_i\geq 2}W_d(p_1-1,1)\otimes\cdots\oplus W_d(p_r-1,1)
\]
and its counterpart in the language of Schur functions
\[
H(B_d(U_k),T_d)=\sum_{r=0}^{k-1}\sum_{p_i\geq 2}S_{(p_1-1,1)}(T_d)\cdots S_{(p_r-1,1)}(T_d)
\]
and complete the proof.
\end{proof}

\begin{remark}\label{basis of relatively free algebra}
Proposition \ref{formula of Drensky and Kasparian}
expresses the fact that the basis of the vector space $F_d(U_k)$ consists of the polynomials in $d$ variables
\[
x_1^{a_1}\cdots x_d^{a_d}[x_{i_{11}},x_{i_{12}},\ldots,x_{i_{1p_1}}]\cdots [x_{i_{r1}},x_{i_{r2}},\ldots,x_{i_{rp_r}}],
\]
where $p_j\geq 2$, $j=1,\ldots,r$, $r=0,1,\ldots,k-1$, and the subscripts in the commutators satisfy
$i_{j1}>i_{j2}\leq\cdots\leq i_{jp_j}$. The commutators are left normed, i.e.,
\[
[v_1,v_2,\ldots,v_{p-1},v_p]=[[\ldots [v_1,v_2],\ldots,v_{p-1}],v_p].
\]
For fixed $j$ the commutators $[x_{i_{j1}},x_{i_{j2}},\ldots,x_{i_{jp_j}}]$, $p_j\geq 2$, form also a basis of the commutator ideal
$L_d'/L_d''$ of the free metabelian Lie algebra $L_d/L_d''$, where $L_d$ is the free Lie algebra of rank $d$.
It is well known that the $GL_d$-module $L_d'/L_d''$ is a direct sum of $W_d(p-1,1)$ participating with multiplicity 1, $p\geq 2$,
and this means that the Hilbert series of $L_d'/L_d''$ is
\[
H(L_d'/L_d'',T_d)=\sum_{p\geq 2}S_{(p-1,1)}(T_d).
\]
\end{remark}

\begin{remark}\label{algebra of constants}
The commutators $[x_{i_1},x_{i_2},\ldots,x_{i_p}]$, $i_1>i_2\leq\cdots\leq i_p$, from
Remark \ref{basis of relatively free algebra}
generate the subalgebra of $K\langle X_d\rangle$ of the so called ``proper'' polynomials.
By Gerritzen \cite{Ge} it is a free algebra and these commutators form a free generating set
(implicitly this is also in \cite{DK}).
This algebra is also the algebra of constants (i.e., the intersection of the kernels) of the formal partial derivatives
$\partial/\partial x_i$, $i=1,\ldots,d$, as described by
Falk \cite{Fa}. (The freedom does not
follow immediately  from invariant theory of free algebras as developed by
Lane \cite{La} and Kharchenko \cite{Kh}. The derivations
$\partial/\partial x_i$ are locally nilpotent and the exponential automorphisms
$\exp(\partial/\partial x_i)$ are affine automorphisms of
$K\langle X_d\rangle$.
But we cannot apply directly \cite{La} and \cite{Kh}
because these automorphisms are not linear.)
Specht \cite{S} applied proper polynomials in the study of PI-algebras,
see the book \cite{D6} for further applications. To the best of our knowledge
the papers by Specht \cite{S} and Malcev \cite{M} in 1950
are the first publications  where representation theory of symmetric groups was involved in the study of PI-algebras.
Later Regev in a series of brilliant papers, starting with the theorems for the exponential growth of the codimension sequence
and the tensor product of PI-algebras \cite{R1} in 1972, developed his powerful method for quantitative study of PI-algebras.
The present paper is one of the many others which exploit the ideas of Regev.
\end{remark}

\section{Main Results}

We start the exposition of the main results of our paper with the Hilbert series of the relatively
free algebras of the variety generated by the algebra of $k\times k$ upper triangular matrices.

\begin{theorem}\label{Hilbert series of relatively free algebra}
The Hilbert series $H(F_d(U_k),T_d)$ of the algebra $F_d(U_k)$ is
\[
H(F_d(U_k),T_d)=\frac{1}{t_1+\cdots +t_d-1}\left(\left(1+(t_1+\cdots+t_d-1)\prod_{i=1}^d\frac{1}{1-t_i}\right)^k-1\right)
\]
\[
=\sum_{j=1}^k\binom{k}{j}\left(\prod_{i=1}^d\frac{1}{1-t_i}\right)^j(t_1+\cdots+t_d-1)^{j-1}.
\]
\end{theorem}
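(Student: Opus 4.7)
The plan is to iterate Formanek's formula (Proposition \ref{Hilbert series of products of ideals}) starting from Maltsev's identification $T(U_k) = C^k$, where $C = T(K)$ is the commutator T-ideal. Since $T(U_k) = T(U_{k-1})\cdot T(K)$ and the commutative polynomial algebra $F_d(K)$ has Hilbert series $\phi := \prod_{i=1}^d 1/(1-t_i)$, abbreviating also $s := t_1 + \cdots + t_d$, Formanek's formula applied with $R_1 = U_{k-1}$ and $R_2 = K$ gives the linear recursion
\[
h_k \;=\; h_{k-1} + \phi + (s-1)\,\phi\, h_{k-1} \;=\; \phi + A\, h_{k-1}, \qquad h_1 = \phi,
\]
where $h_k := H(F_d(U_k),T_d)$ and $A := 1 + (s-1)\phi$.

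A routine induction then solves this as a geometric sum,
\[
h_k \;=\; \phi\sum_{r=0}^{k-1} A^r \;=\; \phi\cdot\frac{A^k - 1}{A - 1}.
\]
The decisive simplification is that $A - 1 = (s-1)\phi$ by construction, so the leading $\phi$ cancels and one obtains
\[
h_k \;=\; \frac{A^k - 1}{s-1},
\]
which is the first form asserted in the theorem. The second form then follows at once from the binomial expansion $A^k = \sum_{j=0}^k \binom{k}{j}(s-1)^j\phi^j$: the $j=0$ summand cancels with $-1$, and division by $s-1$ lowers each remaining power of $(s-1)$ by one, producing $\sum_{j=1}^k \binom{k}{j}\phi^j(s-1)^{j-1}$.

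There is essentially no obstacle once Formanek's formula and Maltsev's theorem are available; the whole computation reduces to a geometric sum followed by a binomial expansion, and the fact that $A^k-1$ is divisible by $A-1$ in $\mathbb{Z}[A]$ ensures the quotient is a bona fide element of $\mathbb{C}[[T_d]]$. An equivalent route bypasses the recursion entirely: Proposition \ref{formula of Drensky and Kasparian}(ii) already presents $H(F_d(U_k),T_d)$ as $\phi\sum_{r=0}^{k-1}\bigl(\sum_{p\geq 2} S_{(p-1,1)}(T_d)\bigr)^r$, and Lemma \ref{sum S(n-1,1)} identifies the inner sum with $A$, after which precisely the same algebraic manipulation finishes the proof.
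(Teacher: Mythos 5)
Your proposal is correct, and both of its routes appear in the paper --- but with the emphasis reversed. The paper's proof opens by sketching exactly your primary argument (Maltsev's $T(U_k)=C^k$, Formanek's formula giving the recursion $h_k=\phi+A\,h_{k-1}$ with $h_1=\phi$, ``complete the proof by induction'') and then explicitly sets it aside in favor of a direct computation: it invokes Proposition \ref{formula of Drensky and Kasparian}(ii) to write $H(F_d(U_k),T_d)=\phi\sum_{r=0}^{k-1}\bigl(\sum_{p\ge 2}S_{(p-1,1)}\bigr)^r$, identifies the inner sum with $A=1+(s-1)\phi$ via Lemma \ref{sum S(n-1,1)}, and then performs precisely the geometric-sum and binomial manipulations you carry out. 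So the closing algebra is identical in both treatments; the only genuine difference is which structural input does the work. Your recursion route needs only Maltsev's theorem and Formanek's factorization formula, and is arguably the more economical proof of the bare Hilbert-series identity; the paper's direct route additionally exhibits the $GL_d$-module decomposition of $F_d(U_k)$ behind the formula, which is what the authors exploit later (e.g.\ in Theorem \ref{maximal partitions}) to track where the Schur functions come from. Your verification that $A-1=(s-1)\phi$ cancels the leading $\phi$, and that $A^k-1$ is divisible by $A-1$ so the quotient lives in $\mathbb{C}[[T_d]]$, is exactly the point that makes either version go through.
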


\begin{proof}
One possible way to prove the theorem is the following. Since
$T(U_k)=C^k$, where $C$ is the commutator ideal of $K\langle X\rangle$,
we have that $T(U_k)=T(U_1)T(U_{k-1})$. Proposition \ref{Hilbert series of products of ideals}
gives the recurrent formula
\[
H(F_d(U_k),T_d)=H(F_d(U_1),T_d)+H(F_d(U_{k-1}),T_d)
\]
\[
+(t_1+\cdots+t_d-1)H(F_d(U_1),T_d)H(F_d(U_{k-1}),T_d).
\]
Since $F_d(U_1)=K\langle X_d\rangle/C\cong K[X_d]$, we start with the well known
\[
H(F_d(U_1),T_d)=H(K[X_d],T_d)=\prod_{i=1}^d\frac{1}{1-t_i}
\]
and complete the proof by induction. Instead, we provide a direct proof. By
Proposition \ref{formula of Drensky and Kasparian} (ii)
\[
H(F_d(U_k),T_d)=\prod_{i=1}^d\frac{1}{1-t_i}\sum_{r=0}^{k-1}\left(\sum_{p_1\geq 2}S_{(p_1-1,1)}(T_d)\right)\cdots
\left(\sum_{p_r\geq 2}S_{(p_r-1,1)}(T_d)\right).
\]
Applying Lemma \ref{sum S(n-1,1)} we obtain
\[
H(F_d(U_k),T_d)=\left(\prod_{i=1}^d\frac{1}{1-t_i}\right)\sum_{r=0}^{k-1}
\left(1+(t_1+\cdots+t_d-1)\prod_{i=1}^d\frac{1}{1-t_i}\right)^r
\]
\[
=\left(\prod_{i=1}^d\frac{1}{1-t_i}\right)\frac{\left(1+(t_1+\cdots+t_d-1)\prod_{i=1}^d\frac{1}{1-t_i}\right)^k-1}
{\left(1+(t_1+\cdots+t_d-1)\prod_{i=1}^d\frac{1}{1-t_i}\right)-1}
\]
\[
=\left(\prod_{i=1}^d\frac{1}{1-t_i}\right)\frac{\left(1+(t_1+\cdots+t_d-1)\prod_{i=1}^d\frac{1}{1-t_i}\right)^k-1}
{(t_1+\cdots+t_d-1)\prod_{i=1}^d\frac{1}{1-t_i}}
\]
\[
=\frac{\left(1+(t_1+\cdots+t_d-1)\prod_{i=1}^d\frac{1}{1-t_i}\right)^k-1}{t_1+\cdots+t_d-1}
\]
\[
=\sum_{j=1}^k\binom{k}{j}\left(\prod_{i=1}^d\frac{1}{1-t_i}\right)^j(t_1+\cdots+t_d-1)^{j-1}.
\]
\end{proof}

Theorem \ref{Hilbert series of relatively free algebra} can be restated in the following way
which, combined with Proposition \ref{multiplicity series of Young-derived},
gives an algorithm to compute the multiplicity series of $U_k$.

\begin{corollary}\label{algorithm for multiplicities}
Let $Y$ be the linear operator in ${\mathbb C}[[V_d]]\subset {\mathbb C}[[T_d]]$ which sends the multiplicity series of
the symmetric function $g(T_d)$ to the multiplicity series of its Young-derived:
\[
Y(M(g);T_d)=M\left(g(T_d)\prod_{i=1}^d\frac{1}{1-t_i};T_d\right).
\]
Then the multiplicity series of $U_k$ is
\[
M(U_k;T_d)=\sum_{j=1}^k\sum_{q=0}^{j-1}\sum_{\lambda\vdash q}(-1)^{j-q-1}\binom{k}{j}\binom{j-1}{q}d_{\lambda}Y^j(T_d^{\lambda}),
\]
where $d_{\lambda}$ is the degree of the irreducible $S_n$-character $\chi_{\lambda}$ and
$T_d^{\lambda}=t_1^{\lambda_1}\cdots t_d^{\lambda_d}$ for $\lambda=(\lambda_1,\ldots,\lambda_d)$.
\end{corollary}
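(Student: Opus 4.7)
The plan is to start from the explicit formula in Theorem \ref{Hilbert series of relatively free algebra}, pass term-by-term to the associated multiplicity series using linearity of $M$, and then replace the factor $\left(\prod 1/(1-t_i)\right)^j$ by the operator $Y^j$ acting on the multiplicity series of the remaining factor $(t_1+\cdots+t_d-1)^{j-1}$. Concretely, from
\[
H(F_d(U_k),T_d)=\sum_{j=1}^k\binom{k}{j}\left(\prod_{i=1}^d\frac{1}{1-t_i}\right)^j(t_1+\cdots+t_d-1)^{j-1}
\]
and the definition $M(U_k;T_d)=M(H(F_d(U_k),T_d);T_d)$, linearity of $M$ reduces the problem to computing, for each $j$, the multiplicity series of $(\prod 1/(1-t_i))^j(t_1+\cdots+t_d-1)^{j-1}$.

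Next I would iterate the defining property of $Y$ given in Proposition \ref{multiplicity series of Young-derived}: if $g_j(T_d)=(t_1+\cdots+t_d-1)^{j-1}$, then applying $Y$ exactly $j$ times to $M(g_j;T_d)$ yields $M\bigl(g_j(T_d)\prod_{i=1}^d 1/(1-t_i)^j;T_d\bigr)$. So the task further reduces to identifying $M(g_j;T_d)$ in closed form. For this, expand by the binomial theorem
\[
(t_1+\cdots+t_d-1)^{j-1}=\sum_{q=0}^{j-1}(-1)^{j-q-1}\binom{j-1}{q}(t_1+\cdots+t_d)^q
\]
and then use the classical decomposition of the $q$-th power of the standard character,
\[
(t_1+\cdots+t_d)^q=S_{(1)}(T_d)^q=\sum_{\lambda\vdash q}d_{\lambda}S_{\lambda}(T_d),
\]
with $d_{\lambda}=\chi_{\lambda}(1)$. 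Since the multiplicity series of $S_{\lambda}(T_d)$ is by definition $T_d^{\lambda}$, this determines
\[
M(g_j;T_d)=\sum_{q=0}^{j-1}(-1)^{j-q-1}\binom{j-1}{q}\sum_{\lambda\vdash q}d_{\lambda}T_d^{\lambda}.
\]

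Assembling these pieces, linearity of $Y^j$ on $\mathbb{C}[[V_d]]$ and summation over $j$ produce exactly the stated formula:
\[
M(U_k;T_d)=\sum_{j=1}^k\binom{k}{j}Y^j\!\left(\sum_{q=0}^{j-1}(-1)^{j-q-1}\binom{j-1}{q}\sum_{\lambda\vdash q}d_{\lambda}T_d^{\lambda}\right).
\]
The only genuinely delicate point is keeping the two worlds straight: $M$ is a $\mathbb{C}$-linear map from symmetric power series to (non-symmetric) power series in $\mathbb{C}[[T_d]]$, while $Y$ is defined on the image. So one must justify that $Y^j$ really does realize multiplication by $\prod 1/(1-t_i)^j$ at the multiplicity level, which follows by $j$-fold application of Proposition \ref{multiplicity series of Young-derived}. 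Once that is granted the rest is bookkeeping with the binomial expansion and the standard Schur decomposition of $S_{(1)}^q$.
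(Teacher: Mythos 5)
Your proposal is correct and follows essentially the same route as the paper: both start from the expansion of $H(F_d(U_k),T_d)$ in Theorem \ref{Hilbert series of relatively free algebra}, apply the binomial theorem to $(t_1+\cdots+t_d-1)^{j-1}$, use the regular-representation decomposition $S_{(1)}^q=\sum_{\lambda\vdash q}d_{\lambda}S_{\lambda}$, and convert multiplication by $\left(\prod_{i=1}^d 1/(1-t_i)\right)^j$ into $Y^j$ via $j$-fold application of Proposition \ref{multiplicity series of Young-derived} together with $M(S_{\lambda};T_d)=T_d^{\lambda}$. The only cosmetic difference is that you apply $Y^j$ to the whole multiplicity series of $(t_1+\cdots+t_d-1)^{j-1}$ at once while the paper applies it to each $T_d^{\lambda}$ separately, which is the same by linearity.
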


\begin{proof}
Expanding the expression of $H(F_d(U_k),T_d)$ from Theorem \ref{Hilbert series of relatively free algebra} we obtain
\[
H(F_d(U_k),T_d)=\sum_{j=1}^k\binom{k}{j}\left(\prod_{i=1}^d\frac{1}{1-t_i}\right)^j(t_1+\cdots+t_d-1)^{j-1}
\]
\[
=\sum_{j=1}^k\binom{k}{j}\left(\prod_{i=1}^d\frac{1}{1-t_i}\right)^j\sum_{q=0}^{j-1}(-1)^{j-q-1}\binom{j-1}{q}(t_1+\cdots+t_d)^q.
\]
Now we use the well known equality
\[
(t_1+\cdots+t_d)^q=S_{(1)}^q(T_d)=\sum_{\lambda\vdash q}d_{\lambda}S_{\lambda}(T_d).
\]
In the language of representation theory of the symmetric group applied to PI-algebras, it means that the $S_q$-character
of the multilinear component $P_q$ of degree $q$ of the free algebra $K\langle X\rangle$ has the same decomposition as the
character of the regular representation (i.e., of the character of the group algebra $KS_q$ considered as a left $S_q$-module):
\[
\chi_{S_q}(P_q)=\chi_{S_q}(KS_q)=\sum_{\lambda\vdash q}d_{\lambda}\chi_{\lambda}.
\]
Hence
\[
H(F_d(U_k),T_d)=\sum_{j=1}^k\binom{k}{j}\left(\prod_{i=1}^d\frac{1}{1-t_i}\right)^j\sum_{q=0}^{j-1}(-1)^{j-q-1}\binom{j-1}{q}
\sum_{\lambda\vdash q}d_{\lambda}S_{\lambda}(T_d)
\]
\[
=\sum_{j=1}^k\sum_{q=0}^{j-1}\sum_{\lambda\vdash q}
(-1)^{j-q-1}\binom{k}{j}\binom{j-1}{q}d_{\lambda}\left(\prod_{i=1}^d\frac{1}{1-t_i}\right)^jS_{\lambda}(T_d).
\]
This completes the proof because $M(S_{\lambda}(T_d);T_d)=T_d^{\lambda}$ and
\[
M\left(\left(\prod_{i=1}^d\frac{1}{1-t_i}\right)^jS_{\lambda}(T_d);T_d\right)=Y^j(M(S_{\lambda}(T_d);T_d))=Y^j(T_d^{\lambda}).
\]
\end{proof}

The following theorem describes the partitions $\lambda$ with $m_{\lambda}(U_k)\not=0$
and the explicit form of the multiplicities for the partitions of ``maximal'' shape.

\begin{theorem}\label{maximal partitions}
{\rm (i)} If $m_{\lambda}(U_k)\not=0$, then $\lambda=(\lambda_1,\ldots,\lambda_{2k-1})$ is a partition in not more than
$2k-1$ parts and $\overline{\lambda}=(\lambda_{k+1},\ldots,\lambda_{2k-1})$ is a partition of $i\leq k-1$.

{\rm (ii)} If $\overline{\lambda}$ is a partition of $k-1$, then
\[
m_{\lambda}(U_k)=d_{\overline{\lambda}}\dim(W_k(\lambda_1,\ldots,\lambda_k)),
\]
where $d_{\overline{\lambda}}$ is the degree of the $S_{k-1}$-character $\chi_{\overline{\lambda}}$ and
\[
\dim(W_k(\lambda_1,\ldots,\lambda_k))=S_{\lambda}(\underbrace{1,\ldots,1}_{k\text{ \rm times}})
=\prod_{1\leq i<j\leq k}  \frac{\lambda_i - \lambda_j +j -i}{j-i}.
\]
\end{theorem}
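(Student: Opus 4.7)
The plan is to extract $m_{\lambda}(U_k)$ directly from Corollary \ref{algorithm for multiplicities}: $m_{\lambda}(U_k)$ is a signed sum over triples $(j,q,\mu)$ with $\mu\vdash q$, $0\leq q\leq j-1\leq k-1$, of the multiplicity of $S_{\lambda}$ in $\bigl(\prod_{i=1}^d\frac{1}{1-t_i}\bigr)^j S_{\mu}(T_d)$. By iterating the Pieri (Young) rule, this multiplicity equals the number of chains $\mu=\nu^{(0)}\subset\nu^{(1)}\subset\cdots\subset\nu^{(j)}=\lambda$ in which each skew $\nu^{(v)}/\nu^{(v-1)}$ is a horizontal strip. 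The horizontal-strip condition gives $\nu^{(v)}_r\leq\nu^{(v-1)}_{r-1}$, and iteration produces $\lambda_r\leq\mu_{r-j}$ for every $r>j$.

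For part (i), since $q\leq k-1$, every $\mu$ appearing has at most $k-1$ parts, so $\mu_{r-j}=0$ for $r\geq k+j$. Combined with $j\leq k$ this gives $\lambda_r=0$ for $r\geq 2k$, so $\lambda$ has at most $2k-1$ parts. Summing $\lambda_r\leq\mu_{r-j}$ over $r=k+1,\ldots,2k-1$ yields
\[
\sum_{r=k+1}^{2k-1}\lambda_r\leq |\mu|=q\leq j-1\leq k-1.
\]
Since each individual multiplicity counts chains and therefore vanishes when either bound is violated, the same holds for $m_{\lambda}(U_k)$.

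For part (ii), the assumption $\overline{\lambda}\vdash k-1$ saturates both inequalities. From $\sum_{r=k+1}^{2k-1}\lambda_r=k-1$ and the chain $k-1\leq|\mu|=q\leq j-1\leq k-1$ one gets $j=k$ and $q=k-1$; termwise equality in $\lambda_r\leq\mu_{r-k}$ (forced because the sum equals $|\mu|$) then gives $\mu_s=\lambda_{k+s}$, i.e.\ $\mu=\overline{\lambda}$. The prefactor $(-1)^{k-(k-1)-1}\binom{k}{k}\binom{k-1}{k-1}$ equals $1$, so
\[
m_{\lambda}(U_k)=d_{\overline{\lambda}}\cdot\bigl[S_{\lambda}\bigr]\Bigl(\prod_{i=1}^d\tfrac{1}{(1-t_i)^k}\Bigr)\,S_{\overline{\lambda}}(T_d).
\]
By Proposition \ref{result about multiplicities}, $\prod 1/(1-t_i)^k=\sum_{\pi}\dim W_k(\pi)\,S_{\pi}(T_d)$ with $\pi$ ranging over partitions with at most $k$ parts, so the remaining coefficient equals $\sum_{\pi}\dim W_k(\pi)\,c^{\lambda}_{\pi,\overline{\lambda}}$. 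The main step, which is the principal obstacle, is to verify $c^{\lambda}_{\pi,\overline{\lambda}}=\delta_{\pi,(\lambda_1,\ldots,\lambda_k)}$ for every such $\pi$: non-vanishing of the Littlewood--Richardson coefficient forces $\pi\subset\lambda$ with $|\pi|=\lambda_1+\cdots+\lambda_k$, which together with $\ell(\pi)\leq k$ pins $\pi=(\lambda_1,\ldots,\lambda_k)$, and for this $\pi$ the skew $\lambda/\pi$ is just a vertical translate of the straight shape $\overline{\lambda}$, admitting the unique LR tableau with $i$'s in row $i$. The closed product formula for $\dim W_k(\lambda_1,\ldots,\lambda_k)$ is then the Weyl dimension formula recorded in the preliminaries.
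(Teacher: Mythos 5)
Your proof is correct. It shares its skeleton with the paper's argument --- both work from the expansion underlying Corollary \ref{algorithm for multiplicities} and both end by invoking Proposition \ref{result about multiplicities} to produce the factor $\dim(W_k(\lambda_1,\ldots,\lambda_k))$ --- but the combinatorial bookkeeping is genuinely different. For (i) the paper argues qualitatively: the partitions occurring in $(\prod(1-t_i)^{-1})^j$ have at most $j\leq k$ parts, and each of the $q$ subsequent multiplications by $S_{(1)}$ adds a single box, so at most $q\leq k-1$ boxes can land below row $k$. You instead iterate the Pieri interlacing inequality to get the quantitative bound $\lambda_r\leq\mu_{r-j}$, which gives both conclusions of (i) at once and, more importantly, makes the ``only one triple $(j,q,\mu)=(k,k-1,\overline{\lambda})$ survives'' step of (ii) an immediate consequence of forcing equality in the chain $k-1\leq q\leq j-1\leq k-1$. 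For (ii) the paper keeps $(S_{(1)})^{k-1}$ unexpanded and obtains $d_{\overline{\lambda}}$ as the number of standard $\overline{\lambda}$-tableaux recording the order in which the $k-1$ boxes below row $k$ are adjoined; you expand $(S_{(1)})^{k-1}=\sum_{\mu\vdash k-1}d_{\mu}S_{\mu}$ first, so $d_{\overline{\lambda}}$ enters as a ready-made coefficient and the combinatorial content is shifted to the identity $c^{\lambda}_{\pi,\overline{\lambda}}=\delta_{\pi,(\lambda_1,\ldots,\lambda_k)}$, which you verify correctly (containment plus the size count pins $\pi$, and a vertically translated straight shape carries a unique lattice filling, necessarily of content equal to its shape). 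The paper's route never leaves the Young/branching rule; yours imports the Littlewood--Richardson rule, but in exchange the reduction to the single surviving term is cleaner and the two parts of the theorem are handled by one and the same inequality.
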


\begin{proof}
(i) By Theorem \ref{Hilbert series of relatively free algebra} and in the spirit of
Corollary \ref{algorithm for multiplicities}
the nonzero multiplicities $m_{\lambda}(U_k)$ in the cocharacter sequence of $U_k$ come from the decomposition
as an infinite sum of Schur functions of
\[
\left(\prod_{i=1}^d\frac{1}{1-t_i}\right)^j(t_1+\cdots+t_d)^q
=(S_{(1)}(T_d))^q\left(\prod_{i=1}^d\frac{1}{1-t_i}\right)^j,
\]
$j\leq k$, $q\leq k-1$. The Schur functions $S_{\pi}(T_d)$ participating in the
product $(\prod_{i=1}^d 1/(1-t_i))^j$ are indexed by partitions $\pi$ in $\leq j\leq k$ parts.
By the Branching theorem the multiplication of $S_{\pi}(T_d)$ by $S_{(1)}(T_d)$
gives a sum of $S_{\rho}(T_d)$ where the diagrams $[\rho]$ are obtained from the diagram $[\pi]$ by adding a box.
Clearly $[\rho]$ has not more than one box below the $k$-th row. Multiplying $q$ times by $S_{(1)}(T_d)$
we add to the diagram $[\pi]$ not more than $q\leq k-1$ boxes below the $k$-th row. In this way, if $m_{\lambda}(U_k)\not=0$,
$\lambda=(\lambda_1,\ldots,\lambda_n)$, then
$\lambda_{k+1}+\cdots+\lambda_n\leq k-1$ and obviously $\lambda_{2k}=0$.

(ii) It follows from the proof of (i) that the multiplicity $m_{\lambda}(U_k)$ for $\overline{\lambda}\vdash k-1$
comes from the products of $S_{\mu}(T_d)$ and $(S_{(1)}(T_d))^{k-1}$, where $S_{\mu}(T_d)$ participates in the decomposition
of $(\prod_{i=1}^d 1/(1-t_i))^k$, the diagram $[\mu]=[\mu_1,\ldots,\mu_k]$ has exactly $k$ rows,
and all $k-1$ boxes added to $[\mu]$ to obtain $[\lambda]$ when multiplying $k-1$ times by $S_{(1)}(T_d)$ form the rows of $[\lambda]$
below the first $k$ rows. Hence $\lambda_i=\mu_i$, $i=1,\ldots,k$.
We fill in with $i$ the box of the diagram corresponding to the $i$-th
factor $S_{(1)}(T_d)$. As in the proof of Proposition \ref{result about multiplicities}, the $k-1$ boxes below the
$k$-th row of the diagram $[\lambda]$ are filled in with the integers $1,\ldots,k-1$ and form a standard $\overline{\lambda}$-tableau.
Again, there is a bijection between the standard $\overline{\lambda}$-tableaux
and the summands $S_{\lambda}(T_d)$ obtained from a given $S_{\mu}(T_d)=S_{(\lambda_1,\ldots,\lambda_k)}(T_d)$.

\bigskip

\begin{center}
{\normalsize \Yvcentermath1
\[
\begin{matrix}
\young(\:\:\:\:\:\:\:,\:\:\:\:\:\:,\:\:\:\:\:,\:\:\:)\\
\\
\downarrow\otimes \young(1)\\
\\
\young(\:\:\:\:\:\:\:,\:\:\:\:\:\:,\:\:\:\:\:,\:\:\:,1)\\
\\
\swarrow\quad\quad\otimes\young(2)\quad\quad \searrow\\
\\
\young(\:\:\:\:\:\:\:,\:\:\:\:\:\:,\:\:\:\:\:,\:\:\:,12)\quad\oplus\quad \young(\:\:\:\:\:\:\:,\:\:\:\:\:\:,\:\:\:\:\:,\:\:\:,1,2)\\
\\
\swarrow\quad\quad\quad\quad\quad\quad\downarrow\quad\quad\quad\otimes\young(3)\quad\quad\quad\downarrow\quad\quad\quad\quad\quad\quad \searrow\\
\\
\young(\:\:\:\:\:\:\:,\:\:\:\:\:\:,\:\:\:\:\:,\:\:\:,123)\oplus \young(\:\:\:\:\:\:\:,\:\:\:\:\:\:,\:\:\:\:\:,\:\:\:,12,3)
\oplus \young(\:\:\:\:\:\:\:,\:\:\:\:\:\:,\:\:\:\:\:,\:\:\:,13,2)\oplus \young(\:\:\:\:\:\:\:,\:\:\:\:\:\:,\:\:\:\:\:,\:\:\:,1,2,3)\\
\\
k=4,\quad (\lambda_1,\lambda_2,\lambda_3,\lambda_4)=(7,6,5,3)\\
\end{matrix}
\]
}
\end{center}

\bigskip

\noindent
By Proposition \ref{result about multiplicities}
the multiplicity of $S_{\mu}(T_d)$ in the product $(\prod_{i=1}^d 1/(1-t_i))^k$ is equal to $\dim(W_k(\mu))$.
This completes the proof because the number of the standard $\overline{\lambda}$-tableaux is equal to the degree
$d_{\overline{\lambda}}$ of the irreducible $S_{k-1}$-character $\chi_{\overline{\lambda}}$.
\end{proof}

Using standard procedures of Maple only we have written a small program which computes the multiplicity series of $U_k$.
It is more convenient to state the results of the computations for the difference
$M'(U_k;V_d)-M'(U_{k-1};V_d)$ than for $M'(U_k;V_d)$ or $M(U_k;T_d)$.
(Recall that $M'(R;V_d)=M(R;T_d)$, where $v_i=t_1\cdots t_i$, $i=1,\ldots,d$.)
We give the results for the first several $k$. We write $M'(U_k;V)$ instead of $M'(U_k;V_d)$
assuming that the number of variables $d$ is sufficiently large and has the property
that $m_{\lambda}(U_k)=0$ if $\lambda_{d+1}\not=0$. The case $k=1$ (when $U_1=K$) is trivial and
the case $k=2$ is known (and can be computed applying the formula of Berele and Regev given in the introduction,
see \cite{MRZ}, or directly by the Young rule and Proposition \ref{formula of Drensky and Kasparian} (i)).
We state the results for completeness.

\begin{theorem}\label{multiplicities for small sizes}
The multiplicity series and the multiplicities of the cocharacter sequence
of the algebra $U_k$ of the $k\times k$ upper triangular matrices for $k=1,2$ are
\[
M'(U_1;V)=\frac{1}{1-v_1},\quad m_{\lambda}(U_1)
=\begin{cases}
1,& \lambda=(\lambda_1)\\
0,&\lambda_2>0;\\
\end{cases}
\]
\[
M'(U_2;V)-M'(U_1;V)=\frac{v_2+v_3}{(1-v_1)^2(1-v_2)},
\]
\[
m_{\lambda}(U_2)-m_{\lambda}(U_1)
=\begin{cases}
\lambda_1-\lambda_2+1,&\lambda=(\lambda_1,\lambda_2),\lambda_2>0,\\
 \lambda_1-\lambda_2+1,&\lambda=(\lambda_1,\lambda_2,1),\\
 0,& \text{for all other }\lambda.\\
\end{cases}
\]
\end{theorem}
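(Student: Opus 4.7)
The plan is to treat the two cases $k=1$ and $k=2$ separately: $k=1$ is essentially immediate from $F_d(U_1) \cong K[X_d]$, and $k=2$ reduces to direct Young-rule bookkeeping applied to the formula of Proposition \ref{formula of Drensky and Kasparian}.

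For $k=1$, since $T(U_1) = C$ and $F_d(U_1) \cong K[X_d]$, the Hilbert series is
\[
H(F_d(U_1), T_d) = \prod_{i=1}^d \frac{1}{1-t_i} = \sum_{n \geq 0} S_{(n)}(T_d),
\]
so the only nonzero multiplicities are $m_{(n)}(U_1) = 1$ for one-row partitions, and $M'(U_1; V) = \sum_{n \geq 0} v_1^n = 1/(1-v_1)$. For $k=2$, I would work with the difference $H(F_d(U_2), T_d) - H(F_d(U_1), T_d)$. Proposition \ref{formula of Drensky and Kasparian}(ii) with $r \in \{0, 1\}$, together with Lemma \ref{sum S(n-1,1)}, gives
\[
H(F_d(U_2), T_d) = \prod_{i=1}^d \frac{1}{1-t_i}\Bigl(1 + \sum_{p \geq 2} S_{(p-1, 1)}(T_d)\Bigr),
\]
so the difference is $\prod_{i=1}^d 1/(1-t_i) \cdot \sum_{p \geq 2} S_{(p-1, 1)}(T_d)$.

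The next step is to expand $\prod_{i=1}^d 1/(1-t_i) = \sum_{m \geq 0} S_{(m)}(T_d)$ and decompose each product $S_{(m)} S_{(p-1, 1)}$ by the Young rule: its summands are the $S_\lambda(T_d)$ with $\lambda = (\lambda_1, \lambda_2, \lambda_3)$ whose shape differs from $(p-1, 1)$ by a horizontal strip of length $m$. The strip condition translates to
\[
\lambda_1 \geq p-1 \geq \lambda_2 \geq 1 \geq \lambda_3,
\]
so $\lambda_3 \in \{0, 1\}$, $\lambda_2 \geq 1$, and $\lambda_2 \leq p-1 \leq \lambda_1$. Since $m$ is determined by $\lambda$ and $p$, summing over $m \geq 0$ simply drops that constraint, and for each admissible $\lambda$ the number of valid values of $p \geq 2$ equals $\lambda_1 - \lambda_2 + 1$. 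This gives $m_\lambda(U_2) - m_\lambda(U_1) = \lambda_1 - \lambda_2 + 1$ whenever $\lambda_2 \geq 1$ and $\lambda_3 \in \{0, 1\}$, and zero otherwise, which is exactly the tabulated statement.

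The remaining step is the closed-form summation in $V$-variables. Using $t_1^{\lambda_1} t_2^{\lambda_2} t_3^{\lambda_3} = v_1^{\lambda_1 - \lambda_2} v_2^{\lambda_2 - \lambda_3} v_3^{\lambda_3}$ and splitting the sum according to $\lambda_3 = 0$ and $\lambda_3 = 1$, the elementary identities $\sum_{a \geq 0}(a+1) v_1^a = 1/(1-v_1)^2$, $\sum_{b \geq 1} v_2^b = v_2/(1-v_2)$, and $\sum_{b \geq 0} v_2^b = 1/(1-v_2)$ combine to
\[
\frac{1}{(1-v_1)^2}\left(\frac{v_2}{1-v_2} + v_3 \cdot \frac{1}{1-v_2}\right) = \frac{v_2 + v_3}{(1-v_1)^2(1-v_2)}.
\]
No step is conceptually hard; the only place that requires care is the strip-endpoint bookkeeping in the Young rule, making sure the partition constraint $\lambda_2 \geq 1$ survives into the summation and the bound $\lambda_3 \in \{0, 1\}$ is enforced. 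After that, the conversion is routine geometric-series manipulation.
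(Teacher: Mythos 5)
Your proposal is correct and follows exactly the route the paper itself indicates but does not write out: the paper states these $k=1,2$ results ``for completeness,'' remarking that they can be obtained directly by the Young rule from Proposition \ref{formula of Drensky and Kasparian}, which is precisely what you do (the interlacing bookkeeping $\lambda_1\geq p-1\geq\lambda_2\geq 1\geq\lambda_3$ and the resulting count $\lambda_1-\lambda_2+1$ are right, as is the geometric-series conversion to the $V$-variables). So this is essentially the same approach, just carried out in full.
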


The results for $U_3$ are the following.

\begin{theorem}\label{3 x 3 matrices}
{\rm (i)} The difference of the multiplicity series of $U_3$ and $U_2$ is
\[
M'(U_3;V)-M'(U_2;V)=\left(\frac{v_5+v_4^2+4v_4+4v_3}{1-v_3}+v_2^2\right)\frac{1-v_1v_2}{(1-v_1)^3(1-v_2)^3}
\]
\[
-\frac{(v_2^2-v_1-3v_2+3)v_4+(v_1v_2^2-v_1v_2+v_2^2-v_1-4v_2+4)v_3}{(1-v_1)^3(1-v_2)^3};
\]

{\rm (ii)} The explicit form of the corresponding multiplicities is
\[
m_{\lambda}(U_3)-m_{\lambda}(U_2)
=\begin{cases}
n_{\lambda},&\lambda=(\lambda_1,\lambda_2,\lambda_3,2),\\
n_{\lambda},&\lambda=(\lambda_1,\lambda_2,\lambda_3,1,1),\\
4n_{\lambda}-c_{\lambda},&\lambda=(\lambda_1,\lambda_2,\lambda_3,1),\\
4n_{\lambda}-c_{\lambda},&\lambda=(\lambda_1,\lambda_2,\lambda_3),\lambda_3>0,\\
\frac{1}{2}\lambda_1(\lambda_1-\lambda_2+1)(\lambda_2-1),&\lambda_2\geq 2,\\
 0,& \text{for all other }\lambda,\\
\end{cases}
\]
where
\[
n_{\lambda}=\dim(W_3(\lambda_1,\lambda_2,\lambda_3))
=\frac{1}{2}(\lambda_1-\lambda_2+1)(\lambda_2-\lambda_3+1)(\lambda_1-\lambda_3+2)
\]
and the correction $c_{\lambda}$ is
\[
c_{\lambda}
=\begin{cases}
\frac{1}{2}(\lambda_1+2)(\lambda_1-\lambda_2+1)(\lambda_2+1),&\lambda=(\lambda_1,\lambda_2,1,1),\\
\frac{1}{2}(\lambda_1+3)(\lambda_1-\lambda_2+1)(\lambda_2+2),&\lambda=(\lambda_1,\lambda_2,1),\\
0,& \text{for all other }\lambda .\\
\end{cases}
\]
\end{theorem}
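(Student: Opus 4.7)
The plan is to compute $M'(U_3;V)-M'(U_2;V)$ explicitly via the algorithm in Corollary \ref{algorithm for multiplicities}, starting from the closed-form Hilbert series of Theorem \ref{Hilbert series of relatively free algebra}. By Theorem \ref{maximal partitions}(i), every nonzero multiplicity $m_\lambda(U_3)$ is indexed by a partition in at most $2k-1=5$ parts with $\lambda_4+\lambda_5\leq 2$, so it is enough to work with $d=5$ variables throughout.

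Writing $P_j=\bigl(\prod_{i=1}^d 1/(1-t_i)\bigr)^j$ and $s=t_1+\cdots+t_d=S_{(1)}(T_d)$, Theorem \ref{Hilbert series of relatively free algebra} gives
\[
H(F_d(U_3),T_d)-H(F_d(U_2),T_d)=P_1-2P_2+P_3+2(s-1)(P_2-P_3)+(s-1)^2P_3 -(P_1-P_2)\cdot 0,
\]
which after expanding $s^2=S_{(1)}^2=S_{(2)}+S_{(1,1)}$ and applying the multiplicity-series functor becomes
\[
M(U_3;T_d)-M(U_2;T_d)=Y(1)-2Y^2(1)+Y^3(1)+2Y^2(t_1)-2Y^3(t_1)+Y^3(t_1^2)+Y^3(t_1t_2).
\]
The three terms $Y^j(1)$ for $j=1,2,3$ are given in closed form by Proposition \ref{result about multiplicities}; in the variables $v_i=t_1\cdots t_i$ they coincide with the Hilbert series of the $UT_d$-invariants in one, two and three copies of the canonical module (compare Remark \ref{comments on classical invariant theory}), and are elementary rational functions in $v_1,\dots,v_d$.

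For the remaining four terms I would apply Proposition \ref{multiplicity series of Young-derived} iteratively. Because the inputs $t_1$, $t_1^2$ and $t_1t_2$ depend on only one or two variables, Remark \ref{Young rule to p rows} guarantees that $Y^j$ of them depends on at most $j{+}1\leq 4$ variables; moreover, by Theorem \ref{maximal partitions}(i) most of the $2^{d-1}$ sign-alternating substitutions in Proposition \ref{multiplicity series of Young-derived} collapse, since the resulting multiplicity series are supported on partitions in at most five rows. Collecting the seven contributions over the common denominator $(1-v_1)^3(1-v_2)^3(1-v_3)$ and simplifying gives the rational expression asserted in part (i).

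For part (ii) I would extract the coefficient of $v_1^{\lambda_1-\lambda_2}v_2^{\lambda_2-\lambda_3}v_3^{\lambda_3-\lambda_4}v_4^{\lambda_4-\lambda_5}v_5^{\lambda_5}$ in the rational function from part (i) and subtract $m_\lambda(U_2)$ as read off from Theorem \ref{multiplicities for small sizes}. The cases $\lambda_4+\lambda_5=2$ (i.e.\ $\overline\lambda\vdash k-1$) automatically reproduce the $n_\lambda$ formula of Theorem \ref{maximal partitions}(ii), since there $d_{\overline\lambda}=1$ for both $\overline\lambda=(2)$ and $\overline\lambda=(1,1)$; the case $\lambda_3>0$ with $\overline\lambda\vdash 0$ or $\overline\lambda\vdash 1$ gives the leading $4n_\lambda$ contribution, with the correction $c_\lambda$ arising from the $2Y^3(t_1)$ and $Y^3(t_1t_2)$ pieces that correct for Schur functions on the boundary of the support (cf.\ the bijection argument in the proof of Theorem \ref{maximal partitions}(ii)); the last case $\lambda_2\geq 2$, $\lambda_3=0$, comes solely from $2Y^2(t_1)$ and $-2Y^2(1)$.

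The main obstacle is not conceptual but computational: each $Y^j(T_d^\lambda)$ is presented by Proposition \ref{multiplicity series of Young-derived} as a sum of up to eight rational fractions, and simplifying the total to the compact form in part (i) requires organized bookkeeping, best carried out on a computer algebra system. Once a candidate formula is obtained, the identity of Remark \ref{how to check the proof} multiplies back out to the Vandermonde of $H(F_d(U_3),T_d)-H(F_d(U_2),T_d)$, and thus verifies both parts of the theorem directly.
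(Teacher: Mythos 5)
Your proposal follows essentially the same route as the paper: the authors likewise obtain part (i) by running the algorithm of Corollary \ref{algorithm for multiplicities} (i.e., applying $Y$ via Proposition \ref{multiplicity series of Young-derived} to exactly the seven terms you list) in a computer algebra system and certifying the result with Remark \ref{how to check the proof}, and part (ii) by expanding the resulting rational function into a power series. Two small slips worth fixing: your intermediate regrouping should read $P_1-2P_2+P_3+2s(P_2-P_3)+s^2P_3$ (the $(s-1)$-version double-subtracts, although the $Y$-expression you derive from it is the correct one), and the two-row multiplicities also receive nonzero contributions from $Y^3(1)$, $Y^3(t_1)$, $Y^3(t_1^2)$ and $Y^3(t_1t_2)$, not only from $2Y^2(t_1)-2Y^2(1)$.
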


\begin{proof}
(i) We have evaluated the multiplicity series of $U_3$ applying the algorithm from Corollary \ref{algorithm for multiplicities}.
Instead, we may apply Remark \ref{how to check the proof}: We want to show that the rational function
given in the statement (i) of the theorem and depending
on the five variables $v_1,\ldots,v_5$ is equal to $M'(U_3;V)-M'(U_2;V)$. It is sufficient to check that
the only symmetric function with this rational function as its multiplicity series is equal to
$H(F_d(U_3),T_d)-H(F_d(U_2),T_d)$.
This can be easily verified using the formula from Remark \ref{how to check the proof}.

(ii) We have expanded into a power series the rational expression for $M'(U_3;V)-M'(U_2;V)$ given in part (i) of the theorem
using the equalities
\[
\frac{v_1^{a_1}v_2^{a_2}}{(1-v_1)^3(1-v_2)^3}
=\sum_{n_1\geq a_1}\sum_{n_2\geq a_2}\binom{n_1-a_1+2}{2}\binom{n_2-a_2+2}{2}v_1^{n_1}v_2^{n_2},
\]
\[
\frac{v_1^{a_1}v_2^{a_2}v_3^{a_3}}{(1-v_1)^3(1-v_2)^3(1-v_3)}
=\sum_{n_i\geq a_i}\binom{n_1-a_1+2}{2}\binom{n_2-a_2+2}{2}v_1^{n_1}v_2^{n_2}v_3^{n_3}.
\]
Easy manipulations give the explicit expressions for $m_{\lambda}(U_3)-m_{\lambda}(U_2)$.
In particular,
\[
\frac{1-v_1v_2}{(1-v_1)^3(1-v_2)^3}=\frac{1}{(1-v_1)^3(1-v_2)^2}+\frac{1}{(1-v_1)^2(1-v_2)^3}-\frac{1}{(1-v_1)^2(1-v_2)^2}
\]
\[
=\sum_{n_i\geq 0}\left(\binom{n_1+2}{2}\binom{n_2+1}{1}
+\binom{n_1+1}{1}\binom{n_2+2}{2}-\binom{n_1+1}{1}\binom{n_2+1}{1}\right)v_1^{n_1}v_2^{n_2}
\]
\[
=\frac{1}{2}\sum_{n_i\geq 0}(n_1+1)(n_2+1)(n_1+n_2+2)v_1^{n_1}v_2^{n_2}.
\]
Clearly, the formulas for $\lambda=(\lambda_1,\lambda_2,\lambda_3,2)$ and $\lambda=(\lambda_1,\lambda_2,\lambda_3,1,1)$
follow also from Theorem \ref{maximal partitions}.
\end{proof}

We have computed the multiplicity series $M(U_4;T)$ but the results are too technical to be stated here.

The colength sequence of a PI-algebra $R$ is defined as
the sequence of the number of irreducible characters, counting the multiplicities, in the cocharacter sequence of $R$:
\[
cl_n(R)=\sum_{\lambda\vdash n}m_{\lambda}(R),\quad n=0,1,2,\ldots.
\]
If the algebra $R$ is finite dimensional then the generating function of the colength sequence,
the colength series of $R$,
can be obtained immediately from the multiplicity series $M(R;T_d)$ for a sufficiently large $d$:
\[
cl(R;t)=\sum_{n\geq 0}cl_n(R)t^n=M(R;\underbrace{t,\ldots,t}_{d\text{ \rm times}}).
\]
Theorems \ref{multiplicities for small sizes} and \ref{3 x 3 matrices}
for the multiplicity series of $U_k$ (together with the calculations for $U_4$) give:

\begin{corollary}\label{colength series}
\[
cl(U_1;t)=\frac{1}{1-t};
\]
\[
cl(U_2;t)-cl(U_1;t)=\frac{t^2}{(1-t)^3};
\]
\[
cl(U_3;t)-cl(U_2;t)=\frac{t^4(3+6t+4t^2-2t^3-t^4)}{(1-t)^3(1-t^2)^3};
\]
\[
cl(U_4;t)-cl(U_3;t)=\frac{t^6p(t)}{(1-t)^4(1-t^2)^6},
\]
\[
p(t)=11+45t+63t^2-t^3-42t^4-24t^5+16t^6+12t^7-3t^8-t^9.
\]
\end{corollary}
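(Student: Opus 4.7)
The plan is to exploit the elementary observation that, for a finite dimensional algebra $R$, the colength series is obtained from the multiplicity series by specializing all variables to a single $t$:
\[
cl(R;t) = M(R;\underbrace{t,\ldots,t}_{d\text{ times}})
\]
for any $d$ large enough to support the nonzero multiplicities. In terms of the auxiliary variables $v_i = t_1\cdots t_i$ used in $M'$, this specialization becomes $v_i = t^i$, so the proof reduces to substituting $v_i=t^i$ into each stated multiplicity series and simplifying.

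The first three cases are handled by direct substitution in the formulas already available. For $U_1$, Theorem \ref{multiplicities for small sizes} gives $cl(U_1;t)=1/(1-t)$ immediately. For $U_2$, substituting $v_1=t$, $v_2=t^2$, $v_3=t^3$ into $(v_2+v_3)/((1-v_1)^2(1-v_2))$ and using $1-t^2=(1-t)(1+t)$ collapses the fraction to $t^2/(1-t)^3$. For $U_3$, I would substitute $v_1,\ldots,v_5$ by $t,t^2,\ldots,t^5$ in the formula of Theorem \ref{3 x 3 matrices}(i). The key algebraic simplification is that the factor $1-v_1v_2$ becomes $1-t^3$ and cancels against $1-v_3=1-t^3$ in the denominator; combining the two fractions over the common denominator $(1-t)^3(1-t^2)^3$ then reduces the numerator, after collecting like powers of $t$, to the polynomial $t^4(3+6t+4t^2-2t^3-t^4)$.

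For $U_4$, the paper notes that $M(U_4;T_d)$ has been computed but is too long to print. I would run the algorithm of Corollary \ref{algorithm for multiplicities}: expand $H(F_d(U_4),T_d)$ via Theorem \ref{Hilbert series of relatively free algebra} as a linear combination of terms $(\prod 1/(1-t_i))^j(t_1+\cdots+t_d)^q$ with $1\le j\le 4$, $0\le q\le j-1$, replace $(t_1+\cdots+t_d)^q$ by $\sum_{\lambda\vdash q}d_\lambda S_\lambda(T_d)$, and apply the Young operator $Y$ up to four times using the explicit formula of Proposition \ref{multiplicity series of Young-derived}. Specializing $v_i=t^i$ in the resulting expression for $M'(U_4;V)-M'(U_3;V)$ and clearing the common factor yields a rational function which must be put over the denominator $(1-t)^4(1-t^2)^6$; the numerator is then read off as $t^6 p(t)$.

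The main obstacle is entirely computational rather than conceptual: the iterated action of $Y$ already produces many terms for $j=3,4$, and the final simplification for $U_4$ to the stated compact form with the specific polynomial $p(t)=11+45t+63t^2-t^3-42t^4-24t^5+16t^6+12t^7-3t^8-t^9$ requires careful bookkeeping best carried out in a computer algebra system. As a sanity check one can use the criterion of Remark \ref{how to check the proof}: it suffices to verify that, under the substitution $v_i=t^i$, the candidate rational function for $M'(U_k;V)-M'(U_{k-1};V)$ reproduces $H(F_d(U_k),T_d)-H(F_d(U_{k-1}),T_d)$ evaluated at $t_1=\cdots=t_d=t$, giving an independent test of the computations.
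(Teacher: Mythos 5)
Your proposal is correct and follows essentially the same route as the paper: the corollary is obtained by specializing the multiplicity series of Theorems \ref{multiplicities for small sizes} and \ref{3 x 3 matrices} (and the unstated machine computation for $U_4$) at $v_i=t^i$, i.e.\ $t_1=\cdots=t_d=t$, exactly as you describe, and your simplifications for $k\leq 3$ check out. One caveat on your closing ``sanity check'': evaluating $H(F_d(U_k),T_d)$ at $t_1=\cdots=t_d=t$ yields $\sum_\lambda m_\lambda(U_k)\dim(W_d(\lambda))t^{\vert\lambda\vert}$, not the colength series, so the verification via Remark \ref{how to check the proof} must be carried out on the multivariate series before specializing, not after.
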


\end{document}